\newtheorem{theorem}{Theorem}[section]
\newtheorem{definition}{Definition}[section]
\newtheorem{proposition}{Proposition}[section]
\newtheorem{lemma}{Lemma}[section]
\newtheorem{corollary}{Corollary}[section]
\newtheorem{remark}{Remark}[section]
    \newcommand{\Prob}[1]{\ensuremath{ {\rm Prob}\left\{  #1 \right\}  }}
    \newcommand{\T}{\ensuremath{ \mathbb R^{n_1\times \cdots\times n_d}   }}
   \newcommand{\SSS}{\ensuremath{ \mathbb S^{n^d}   }} 
   \newcommand{\M}{\ensuremath{ \mathbb S_M^{m\times n\times m\times n}   }}  
  \newcommand{\C}{\ensuremath{ \mathbb S_C^{n\times n\times n}   }}   
        \newcommand{\E}{\ensuremath{ \mathbb E   }}
    \newcommand{\bigxiaokuohao}[1]{\ensuremath{ \left(  #1 \right) }}      
    \newcommand{\bigjueduizhi}[1]{\ensuremath{ \left|  #1 \right| }}
                \newcommand{\bignorm}[1]{\ensuremath{ \left\|   #1 \right\|  }}
                                \newcommand{\innerprod}[2]{\ensuremath{ \left\langle   #1 , #2\right\rangle }}
	\definecolor{darkgray}{rgb}{0.66, 0.66, 0.66}
\title{On the Largest Singular Value/Eigenvalue of a  Random Tensor 
}
\author{Yuning Yang  \thanks{College of Mathematics and Information Science, Guangxi University, Nanning, 530004, China  (yyang@gxu.edu.cn).}                             
}
\begin{document} 
\maketitle

\begin{abstract}

  This short note presents upper bounds of the expectations of the largest singular values/eigenvalues  of various types of  random tensors  in the non-asymptotic sense. For a  standard Gaussian tensor of size $n_1\times\cdots\times n_d$, it is shown that the expectation of its largest   singular value is upper bounded by $\sqrt {n_1}+\cdots+\sqrt {n_d}$.
    For  the expectation of the largest $\ell^d$-singular value, it is upper bounded by $2^{\frac{d-1}{2}}\prod_{j=1}^{d}n_j^{\frac{d-2}{2d}}\sum^d_{j=1}n_j^{\frac{1}{2}} $. 
    We also derive the upper bounds of the expectations of the   largest Z-/H-($\ell^d$)/M-/C-eigenvalues of symmetric, partially symmetric, and piezoelectric-type Gaussian tensors, which are respectively upper bounded by $d\sqrt n$, $d\cdot 2^{\frac{d-1}{2}}n^{\frac{d-1}{2}}$, $2\sqrt m+2\sqrt n$, and $3\sqrt n$.  
	
\noindent {\bf Keywords:} random tensor;    eigenvalue;  singular value; Gordon's theorem;      Gaussian process    
\end{abstract}

\noindent {\bf  AMS subject classifications.}   15A18, 15A69, 11M50
\hspace{2mm}\vspace{3mm}

\section{Introduction}

 In random matrix theory, the Gordon's theorem says that the expectation of the largest singular value of a Gaussian matrix of size $m\times n$ can be upper bounded by $\sqrt m + \sqrt n$. The proof is based on  the Gaussian process; see  \cite[Theorem 5.32]{vershynin2012introduction}. Using a similar approach, 
this short note directly extends this result to various types singular values and eigenvalues  of  random  higher-order tensors. For a standard Guassian tensor of size $n_1\times \cdots\times n_d$, the expectation of its largest $\ell^2$-singular value can be upper bounded by $\sum^d_{j=1}\sqrt {n_j}$; the expectation of its largest $\ell^d$-singular value can be upper bounded by $2^{\frac{d-1}{2} }\prod^d_{j=1}n_j^{\frac{d-2}{2d}}\sum^d_{j=1}n_j^{\frac{1}{2}}$. We then consider symmetric, partially symmetric, and piezoelectric-type Gaussian tensors. For a $d$-th order $n$-dimensional symmetric Gaussian tensor, the largest Z- ($\ell^2$-) eigenvalue can be upper bounded by $d\sqrt n$; its largest H- ($\ell^d$-) eigenvalue can be upper bounded by $d\cdot 2^{\frac{d-1}{2}}n^{\frac{d-1}{2}}$. For a fourth-order partially symmetric Gaussian tensor of size $m\times n\times m\times n$, its largest M-eigenvalue is upper bounded by $2\sqrt m + 2\sqrt n$. For a piezoelectric-type tensor of size $n\times n\times n$, its largest C-eigenvalue is upper bounded by $3\sqrt n$.  The definitions of  the aforementioned singular values and eigenvalues will be introduced later.  
Here for the $\ell^2$-singular value cases, the obtained bound is of the same order as that in Tomioka and Suzuki \cite{tomioka2014spectral}   (\!\!\cite{tomioka2014spectral} considered sub-Gaussian tensors which is wider than the current setting) and Nguyen et al. \cite{nguyen2015tensor}, provided that the order of the tensor is fixed. 
 \section{Main  Results}
 
Let $\T$ denote the space of $n_1\times \cdots \times n_d$ size real tensors. For $\mathcal A\in\T$, let $$\rho(\mathcal A):=\max_{\bignorm{\mathbf u_j}=1,\mathbf u_j\in\mathbb R^{n_j}}\innerprod{\mathcal A}{\bigotimes^d_{j=1}\nolimits\mathbf u_j}$$
denote the 
     largest  ($\ell^2$-) singular value of a tensor \cite{lim2005singular}, where $\otimes$ represents the outer product. It is equal to the spectral norm of $\mathcal A$.   Let $\bignorm{\cdot}_d$ denote the $\ell^d$-norm. Let
     \[
     \rho_{\ell^d}(\mathcal A):= \max_{\bignorm{\mathbf u_j}_d=1,\mathbf u_j\in\mathbb R^{n_j}}\innerprod{\mathcal A}{\bigotimes^d_{j=1}\nolimits\mathbf u_j }.
     \]
     Then $\rho_{\ell^d}(\mathcal A)$ is the largest $\ell^d$-singular value of $\mathcal A$ \cite{lim2005singular}. 
       	The following   can be seen as an extension of  part of the Gordon's theorem (see, e.g., \cite[Theorem 5.32]{vershynin2012introduction})   to higher-order tensors.
 \begin{theorem}
 	\label{th:random_tensor_nonsymmetric}
 	Let $\mathcal A \in \T$ whose entries are independent standard normal random variables. Then
 	\begin{align*}
 	\E\rho(\mathcal A) &\leq \sqrt {n_1} + \cdots + \sqrt{ n_d};\\
 	\E\rho_{\ell^d}(\mathcal A) &\leq 2^{\frac{d}{2}} \bigxiaokuohao{ \pi^{-\frac{1}{2}} \Gamma\left( \frac{1}{2(d-1)} +1\right)  }^{\frac{d-1}{d}}\prod^d_{j=1}n_j^{\frac{d-2}{2d}}\sum^d_{j=1}n_j^{\frac{1}{2}} \leq 2^{\frac{d-1}{2} }\prod^d_{j=1}n_j^{\frac{d-2}{2d}}\sum^d_{j=1}n_j^{\frac{1}{2}},
 	\end{align*}
  where $\Gamma(\cdot)$ is the Gamma function. 
 \end{theorem}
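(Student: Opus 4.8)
The plan is to realize both singular values as the supremum of one and the same centered Gaussian process and then dominate that supremum by the supremum of a \emph{decoupled} comparison process, exactly as in the matrix proof of Gordon's theorem. I index by tuples $\mathbf u=(\mathbf u_1,\dots,\mathbf u_d)$ ranging over the relevant product of spheres and set $X_{\mathbf u}=\innerprod{\mathcal A}{\bigotimes^d_{j=1}\mathbf u_j}$, so that $\rho(\mathcal A)=\max X_{\mathbf u}$ over $\prod_j\{\bignorm{\mathbf u_j}=1\}$ and $\rho_{\ell^d}(\mathcal A)=\max X_{\mathbf u}$ over $\prod_j\{\bignorm{\mathbf u_j}_d=1\}$. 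Since the entries of $\mathcal A$ are i.i.d.\ standard normal, $\E X_{\mathbf u}X_{\mathbf v}=\prod_{j=1}^d\innerprod{\mathbf u_j}{\mathbf v_j}$, whence $\E(X_{\mathbf u}-X_{\mathbf v})^2=\bigfnorm{\bigotimes^d_{j=1}\mathbf u_j-\bigotimes^d_{j=1}\mathbf v_j}^2$. The engine is the (one-sided) Sudakov--Fernique comparison: if $\E(X_{\mathbf u}-X_{\mathbf v})^2\le\E(Y_{\mathbf u}-Y_{\mathbf v})^2$ for all $\mathbf u,\mathbf v$, then $\E\max X\le\E\max Y$. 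In both cases I take a linear comparison process $Y_{\mathbf u}=\sum_{j=1}^d s_j\innerprod{\mathbf g_j}{\mathbf u_j}$ with independent standard Gaussian vectors $\mathbf g_j\in\mathbb R^{n_j}$ and suitable weights $s_j$, for which $\E\max Y$ decouples across modes into $\sum_j s_j\,\E\max_{\mathbf u_j}\innerprod{\mathbf g_j}{\mathbf u_j}$.

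For the first bound I would take $s_j\equiv1$. On the product of Euclidean unit spheres one has $\E(X_{\mathbf u}-X_{\mathbf v})^2=2-2\prod_j t_j$ and $\E(Y_{\mathbf u}-Y_{\mathbf v})^2=2d-2\sum_j t_j$, where $t_j=\innerprod{\mathbf u_j}{\mathbf v_j}\in[-1,1]$, so the comparison hypothesis reduces to the elementary inequality $\sum_{j=1}^d t_j-\prod_{j=1}^d t_j\le d-1$ on $[-1,1]^d$; as the left side is multilinear in the $t_j$ its maximum is attained at a vertex, and a short case check on the number of $-1$ coordinates gives the value $d-1$. Sudakov--Fernique then yields $\E\rho(\mathcal A)\le\sum_j\E\bignorm{\mathbf g_j}\le\sum_j\sqrt{n_j}$, the last step by Jensen's inequality $\E\bignorm{\mathbf g_j}\le(\E\bignorm{\mathbf g_j}^2)^{1/2}=\sqrt{n_j}$.

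For the $\ell^d$ bound the obstruction is that $X_{\mathbf u}$ no longer has constant variance on the $\ell^d$-sphere, so the clean identity above is unavailable. Instead I would control the increment by telescoping $\bigotimes\mathbf u_j-\bigotimes\mathbf v_j$ into $d$ rank-one pieces and applying $(a+b)^2\le2a^2+2b^2$ recursively, which yields $\E(X_{\mathbf u}-X_{\mathbf v})^2\le 2^{d-1}\sum_{j=1}^d\prod_{l\ne j}R_l^2\,\bignorm{\mathbf u_j-\mathbf v_j}^2$, where $R_l:=\max_{\bignorm{\mathbf w}_d=1}\bignorm{\mathbf w}=n_l^{(d-2)/(2d)}$. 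Choosing $s_j=2^{(d-1)/2}\prod_{l\ne j}R_l$ makes $\E(Y_{\mathbf u}-Y_{\mathbf v})^2$ dominate this, so Sudakov--Fernique gives $\E\rho_{\ell^d}(\mathcal A)\le 2^{(d-1)/2}\sum_j\prod_{l\ne j}R_l\,\E\max_{\bignorm{\mathbf u_j}_d=1}\innerprod{\mathbf g_j}{\mathbf u_j}$. Hölder duality identifies the inner maximum as $\bignorm{\mathbf g_j}_{d'}$ with conjugate exponent $d'=d/(d-1)$, and Jensen together with the Gaussian absolute moment $\E|g|^{d'}=\pi^{-1/2}2^{d'/2}\Gamma\big(\tfrac1{2(d-1)}+1\big)$ gives $\E\bignorm{\mathbf g_j}_{d'}\le n_j^{(d-1)/d}\,2^{1/2}\big(\pi^{-1/2}\Gamma(\tfrac1{2(d-1)}+1)\big)^{(d-1)/d}$. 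The exponent identity $\big(\prod_{l\ne j}n_l^{(d-2)/(2d)}\big)n_j^{(d-1)/d}=\big(\prod_l n_l^{(d-2)/(2d)}\big)n_j^{1/2}$ then reassembles the sum into $\prod_l n_l^{(d-2)/(2d)}\sum_j n_j^{1/2}$ with exactly the stated constant $2^{d/2}\big(\pi^{-1/2}\Gamma(\tfrac1{2(d-1)}+1)\big)^{(d-1)/d}$, and the final simplification follows by checking $\pi^{-1/2}\Gamma(\tfrac1{2(d-1)}+1)\le 2^{-d/(2(d-1))}$ for $d\ge2$ (equality at $d=2$).

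I expect the main obstacle to be the increment estimate on the $\ell^d$-sphere: one must arrange the telescoping/splitting so that the non-constant factors $\bignorm{\mathbf u_j}_2,\bignorm{\mathbf v_j}_2$ are absorbed into the mode-wise radii $R_l$ while keeping the accumulated constant from the repeated $(a+b)^2\le 2a^2+2b^2$ no worse than $2^{d-1}$ for every mode. By contrast the $\ell^2$ case is routine once the vertex inequality is verified, and the concluding $\Gamma$-function estimate for the simplified constant is elementary calculus.
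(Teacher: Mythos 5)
Your proposal is correct and follows essentially the same route as the paper: the same Slepian/Sudakov--Fernique comparison against a decoupled linear Gaussian process with the same weights $s_j=2^{\frac{d-1}{2}}\prod_{l\neq j}n_l^{\frac{d-2}{2d}}$, the same dual-norm identification $\E\sup_{\|\mathbf u_j\|_d=1}\innerprod{\mathbf g_j}{\mathbf u_j}=\E\|\mathbf g_j\|_{d/(d-1)}$, and the same Gaussian absolute-moment computation yielding the stated constants. The only cosmetic differences are that you verify the $\ell^2$ increment bound by multilinearity and a vertex check of $\sum_j t_j-\prod_j t_j\le d-1$ on $[-1,1]^d$ where the paper uses induction (Lemma \ref{lem:1}), and you obtain the final simplified constant by bounding the Gamma factor directly rather than via the paper's Jensen argument with $p<2$; both variants are sound.
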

Concentration inequalities of $\rho(\mathcal A)$ and $\rho_{\ell^d}(\mathcal A)$ are then given in Corollaries \ref{col:concentration_spectral_norm} and \ref{col:concentration_d_spectral_norm}. 
 
 A tensor is called symmetric if its entries are invariant under any permutation of the indices. The set of $d$-th order $n$-dimensional symmetric tensors is denoted as $\SSS$. The largest Z- ($\ell^2$-) eigenvalue of $\mathcal A \in \SSS$ is given by \cite{qi2005eigenvalues,lim2005singular}
 \[
 \rho_z(\mathcal A) := \max_{\bignorm{\mathbf u}=1,\mathbf u\in \mathbb R^n}\innerprod{\mathcal A}{\overbrace{\mathbf u\otimes\cdots\otimes \mathbf u}^{d {~\rm times}}}.
 \]
 The largest H- ($\ell^k$-) eigenvalue of $\mathcal A\in\SSS$ is given by \cite{qi2005eigenvalues,lim2005singular}\footnote{When $d$ is even, the definitions of \cite{qi2005eigenvalues,lim2005singular} coincide. Here we mainly follow the definition of \cite{lim2005singular}.}
 \[
 \rho_h(\mathcal A) := \max_{\bignorm{\mathbf u}_d=1,\mathbf u\in \mathbb R^n}\innerprod{\mathcal A}{\overbrace{\mathbf u\otimes\cdots\otimes \mathbf u}^{d {~\rm times}}},
 \]
 where $\bignorm{\cdot}_d$ denotes the $\ell^d$-norm.
 \begin{theorem}
 	\label{th:random_tensor_symmetric}
 	Let $\mathcal A \in  \mathbb S^{n^d} $ be  a Gaussian random tensor. Then
 	\begin{align*}
 	\E \rho_z(\mathcal A) & \leq d\sqrt n;\nonumber\\
 	\E \rho_h(\mathcal A) &\leq d\cdot 2^{\frac{d}{2}} \bigxiaokuohao{\pi^{-\frac{1}{2}} \Gamma\left( \frac{1}{2(d-1)}+1  \right) }^{\frac{d-1}{d}}   n^{\frac{d-1}{2}}   \leq d\cdot 2^{\frac{d-1}{2}} n^{\frac{d-1}{2}}  .\label{eq:th:sym:1}
 	\end{align*}
 \end{theorem}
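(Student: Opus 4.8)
The plan is to deduce both inequalities from Theorem~\ref{th:random_tensor_nonsymmetric} by a feasible-set restriction, rather than re-running the Gaussian comparison from scratch. The guiding observation is that the two claimed bounds are precisely the bounds of Theorem~\ref{th:random_tensor_nonsymmetric} specialized to $n_1=\cdots=n_d=n$: indeed $\sqrt{n_1}+\cdots+\sqrt{n_d}=d\sqrt n$, and $2^{\frac{d-1}{2}}\prod_j n_j^{\frac{d-2}{2d}}\sum_j n_j^{1/2}=2^{\frac{d-1}{2}}n^{\frac{d-2}{2}}\cdot d\sqrt n=d\cdot 2^{\frac{d-1}{2}}n^{\frac{d-1}{2}}$, with the same simplification carrying the sharper $\Gamma$-constant through verbatim. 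This makes it natural to view the symmetric maximizations as restrictions of the corresponding non-symmetric ones over a standard Gaussian tensor.

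First I would pass from the symmetric Gaussian tensor $\mathcal A$ to a standard Gaussian tensor $\mathcal G\in\mathbb R^{n\times\cdots\times n}$ with i.i.d.\ $N(0,1)$ entries. The crucial point is that $\mathbf u^{\otimes d}$ is symmetric and the symmetrization operator $\mathrm{Sym}(\cdot)$ is a self-adjoint (orthogonal) projection for the Frobenius inner product, so $\innerprod{\mathrm{Sym}(\mathcal G)}{\mathbf u^{\otimes d}}=\innerprod{\mathcal G}{\mathrm{Sym}(\mathbf u^{\otimes d})}=\innerprod{\mathcal G}{\mathbf u^{\otimes d}}$ for every $\mathbf u$. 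Realizing $\mathcal A=\mathrm{Sym}(\mathcal G)$ gives the process identity $\innerprod{\mathcal A}{\mathbf u^{\otimes d}}=\innerprod{\mathcal G}{\mathbf u^{\otimes d}}$ simultaneously in $\mathbf u$, so $\rho_z$ and $\rho_h$ of $\mathcal A$ may be computed with $\mathcal G$ in place of $\mathcal A$. Next I would enlarge the feasible set: setting $\mathbf u_1=\cdots=\mathbf u_d=\mathbf u$ in $\rho(\mathcal G)$ (resp.\ $\rho_{\ell^d}(\mathcal G)$) is admissible, since $\bignorm{\mathbf u}=1$ forces $\bignorm{\mathbf u_j}=1$ (resp.\ $\bignorm{\mathbf u}_d=1$ forces $\bignorm{\mathbf u_j}_d=1$) for each $j$. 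Hence
\[
\rho_z(\mathcal A)=\max_{\bignorm{\mathbf u}=1}\innerprod{\mathcal G}{\mathbf u^{\otimes d}}\le \rho(\mathcal G),\qquad \rho_h(\mathcal A)=\max_{\bignorm{\mathbf u}_d=1}\innerprod{\mathcal G}{\mathbf u^{\otimes d}}\le \rho_{\ell^d}(\mathcal G).
\]
Taking expectations and applying Theorem~\ref{th:random_tensor_nonsymmetric} with $n_1=\cdots=n_d=n$ then delivers $\E\rho_z(\mathcal A)\le d\sqrt n$ and the stated chain of bounds for $\E\rho_h(\mathcal A)$ after the elementary simplification recorded above.

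The main thing to get right is the reduction in the second step: one must fix the precise normalization under which ``$\mathcal A\in\SSS$ is a Gaussian random tensor'' and verify that contracting it against $\mathbf u^{\otimes d}$ yields the same Gaussian process as contracting a standard Gaussian tensor against $\mathbf u^{\otimes d}$ — equivalently, that only the symmetric part of $\mathcal G$ is seen and that symmetrization does not inflate the relevant variances. As a cross-check one can run Sudakov--Fernique directly on the single-index process $X_{\mathbf u}=\innerprod{\mathcal G}{\mathbf u^{\otimes d}}$, whose covariance is $\E[X_{\mathbf u}X_{\mathbf v}]=\innerprod{\mathbf u}{\mathbf v}^d$; comparing increments with $Y_{\mathbf u}=\sum_{j=1}^d\innerprod{\mathbf g_j}{\mathbf u}$ (independent $\mathbf g_j\sim N(0,I_n)$) reduces to the scalar inequality $d\,t-t^{d}\le d-1$ for $t=\innerprod{\mathbf u}{\mathbf v}\in[-1,1]$, which holds since $t\mapsto dt-t^d$ is nondecreasing there. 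This direct route would in fact yield the sharper $\E\rho_z(\mathcal A)\le\sqrt{dn}$ via $\E\bignorm{\sum_j\mathbf g_j}\le\sqrt{dn}$; to reproduce the theorem's stated bound $d\sqrt n$ verbatim, however, the cleanest path is the feasible-set restriction together with Theorem~\ref{th:random_tensor_nonsymmetric}, so that the only genuinely new ingredient is the symmetric-part identity of the second step.
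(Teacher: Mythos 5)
Your reduction to Theorem \ref{th:random_tensor_nonsymmetric} hinges on the identification $\mathcal A=\mathrm{Sym}(\mathcal G)$ for a standard Gaussian tensor $\mathcal G$, and you correctly flag the normalization as the main thing to verify --- but under the paper's Definition \ref{def:gaussian_sym} it does not come out in your favor. There the independent entries satisfy $\mathcal A_{i_1\cdots i_d}\sim N\bigl(0,\,d/{\rm card}(\Pi(i_1,\ldots,i_d))\bigr)$, whereas $\mathrm{Sym}(\mathcal G)_{i_1\cdots i_d}$ has variance $1/{\rm card}(\Pi(i_1,\ldots,i_d))$; hence $\mathcal A$ is distributed as $\sqrt d\,\mathrm{Sym}(\mathcal G)$, not as $\mathrm{Sym}(\mathcal G)$ (for $d=2$ this is the familiar ${\rm GOE}=(G+G^{\top})/\sqrt 2=\sqrt 2\,\mathrm{Sym}(G)$, consistent with the paper's remark; equivalently, the paper's proof computes $\E\innerprod{\mathcal A}{\mathbf u^{\otimes d}}^2=d\bignorm{\mathbf u}^{2d}$, while your surrogate process has variance $\bignorm{\mathbf u}^{2d}$). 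Carrying the missing $\sqrt d$ through your feasible-set restriction gives only $\E\rho_z(\mathcal A)\le\sqrt d\cdot d\sqrt n=d^{3/2}\sqrt n$, and an analogously $\sqrt d$-inflated bound for $\E\rho_h(\mathcal A)$, both strictly weaker than the theorem. The restriction argument is intrinsically too lossy here: it pays the full non-symmetric price $d\sqrt n$ for a process whose standard deviation only carries a factor $\sqrt d$, whereas the constant $d$ in the theorem must be extracted inside the Gaussian comparison, not after it.

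Your ``cross-check'' is essentially the paper's actual proof and is the route you should promote to the main argument, with the corrected normalization. The paper compares $X_{\mathbf u}=\innerprod{\mathcal A}{\mathbf u^{\otimes d}}$ directly with $Y_{\mathbf u}=d\innerprod{\mathbf h}{\mathbf u}$: the increment bound $\E|X_{\mathbf u}-X_{\mathbf u'}|^2=d\sum_{i_1,\ldots,i_d}(\mathbf u_{i_1}\cdots\mathbf u_{i_d}-\mathbf u'_{i_1}\cdots\mathbf u'_{i_d})^2\le d^2\bignorm{\mathbf u-\mathbf u'}^2$ follows from Lemma \ref{lem:1} restricted to the diagonal $\mathbf u_1=\cdots=\mathbf u_d=\mathbf u$ (this is your scalar inequality $dt-t^d\le d-1$, multiplied through by the extra factor $d$), and Slepian then yields $\E\rho_z(\mathcal A)\le d\,\E\bignorm{\mathbf h}\le d\sqrt n$. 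In particular, the ``sharper'' bound $\sqrt{dn}$ you mention is an artifact of the unit-variance normalization and is not what this argument produces for the tensor of Definition \ref{def:gaussian_sym}. The same correction is needed for the $\rho_h$ part, where the paper runs the identical comparison with $Y_{\mathbf u}=\alpha\innerprod{\mathbf h}{\mathbf u}$ and Lemma \ref{lem:1_H} in place of Lemma \ref{lem:1}.
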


A fourth-order tensor $\mathcal A\in \mathbb R^{m\times n\times m\times n}$ is called partially symmetric  if $\mathcal A_{ijkl}=\mathcal A_{kjil}=\mathcal A_{ilkj}=\mathcal A_{klij}$. Such a set is denoted as $\M$. The largest M-eigenvalue is given by  \cite{qi2009conditions}
\[
\rho_m(\mathcal A) := \max_{\bignorm{\mathbf u}=1,\mathbf u\in\mathbb R^m,\bignorm{\mathbf v}=1,\mathbf v\in\mathbb R^n}\innerprod{\mathcal A}{\mathbf u\otimes\mathbf v\otimes\mathbf u\otimes\mathbf v}.
\]
 \begin{theorem}
	\label{th:random_tensor__part_symmetric}
	Let $\mathcal A \in  \M $ be  a Gaussian random tensor. Then
	\[
	\E \rho_m(\mathcal A) \leq 2\sqrt m + 2\sqrt n.
	\]
\end{theorem}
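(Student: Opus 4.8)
The plan is to reduce Theorem~\ref{th:random_tensor__part_symmetric} to the $\ell^2$-singular value bound of Theorem~\ref{th:random_tensor_nonsymmetric}, exploiting the fact that the rank-one test tensor defining the M-eigenvalue is itself partially symmetric. First I would fix the model: a Gaussian tensor in $\M$ is naturally realized as $\mathcal A = P\mathcal G$, where $\mathcal G\in\mathbb R^{m\times n\times m\times n}$ has i.i.d.\ standard normal entries and $P$ is the orthogonal projection onto the (linear) subspace $\M$ of partially symmetric tensors. The observation to record is that $\mathbf u\otimes\mathbf v\otimes\mathbf u\otimes\mathbf v$ already lies in $\M$, since its $(i,j,k,l)$ entry $u_iv_ju_kv_l$ is invariant under swapping the first/third and the second/fourth indices. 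Because $P$ is self-adjoint and fixes every element of $\M$, one gets the pathwise identity $\innerprod{\mathcal A}{\mathbf u\otimes\mathbf v\otimes\mathbf u\otimes\mathbf v} = \innerprod{P\mathcal G}{\mathbf u\otimes\mathbf v\otimes\mathbf u\otimes\mathbf v} = \innerprod{\mathcal G}{\mathbf u\otimes\mathbf v\otimes\mathbf u\otimes\mathbf v}$ for every $(\mathbf u,\mathbf v)$, so that contracting the correlated tensor $\mathcal A$ against this test tensor is the same as contracting the i.i.d.\ tensor $\mathcal G$ against it.

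Granting this, $\rho_m(\mathcal A) = \max_{\|\mathbf u\|=\|\mathbf v\|=1}\innerprod{\mathcal G}{\mathbf u\otimes\mathbf v\otimes\mathbf u\otimes\mathbf v}$ for every realization of $\mathcal G$. The right-hand maximization is a restriction of the $\ell^2$-singular value problem for $\mathcal G$: choosing $\mathbf u_1=\mathbf u_3=\mathbf u$ and $\mathbf u_2=\mathbf u_4=\mathbf v$ (all unit vectors) is feasible for $\rho(\mathcal G)=\max_{\|\mathbf u_j\|=1}\innerprod{\mathcal G}{\mathbf u_1\otimes\mathbf u_2\otimes\mathbf u_3\otimes\mathbf u_4}$. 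Hence $\rho_m(\mathcal A)\le\rho(\mathcal G)$ pointwise, and taking expectations gives $\E\rho_m(\mathcal A)\le\E\rho(\mathcal G)$. Since $\mathcal G$ is a standard Gaussian tensor of order four and size $(m,n,m,n)$, the first inequality of Theorem~\ref{th:random_tensor_nonsymmetric} applies verbatim and yields $\E\rho(\mathcal G)\le\sqrt m+\sqrt n+\sqrt m+\sqrt n = 2\sqrt m+2\sqrt n$, which is the claim.

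The only substantive step is the distributional reduction in the first paragraph; after that the proof is a one-line feasibility comparison followed by an invocation of Theorem~\ref{th:random_tensor_nonsymmetric}. Accordingly, the main thing to get right is the model: one must justify that the partial symmetry of $\mathcal A$, combined with $\mathbf u\otimes\mathbf v\otimes\mathbf u\otimes\mathbf v\in\M$ and the self-adjointness of $P$, makes the two contractions literally equal as random variables. I expect this reduction to be lossy, since it decouples the two occurrences of $\mathbf u$ and of $\mathbf v$; a direct Sudakov--Fernique comparison should sharpen the constant. Indeed, $\innerprod{\mathcal G}{\mathbf u\otimes\mathbf v\otimes\mathbf u\otimes\mathbf v}$ has increment variance $2\bigl(1-\innerprod{\mathbf u}{\mathbf u'}^2\innerprod{\mathbf v}{\mathbf v'}^2\bigr)$, which is dominated by that of $\sqrt2\innerprod{\mathbf g}{\mathbf u}+\sqrt2\innerprod{\mathbf h}{\mathbf v}$ (with $\mathbf g\in\mathbb R^m$, $\mathbf h\in\mathbb R^n$ standard Gaussian) precisely because $2\innerprod{\mathbf u}{\mathbf u'}+2\innerprod{\mathbf v}{\mathbf v'}-\innerprod{\mathbf u}{\mathbf u'}^2\innerprod{\mathbf v}{\mathbf v'}^2\le 3$ on $[-1,1]^2$, giving the sharper $\sqrt{2m}+\sqrt{2n}$. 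The reduction to Theorem~\ref{th:random_tensor_nonsymmetric} is nevertheless cleaner and keeps the argument uniform with the symmetric cases, which is presumably why the stated constant is $2\sqrt m+2\sqrt n$.
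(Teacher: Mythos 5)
There is a genuine gap, and it is in the step you yourself flag as ``the only substantive step'': the model. The theorem is about the tensor of Definition~2.2 (stated after the theorems; the paper says the definitions ``will be given later''), whose representative entries have variance $2/\mathrm{card}(\Pi(i,j,k,l))$ --- e.g.\ $\mathcal A_{1111}\sim N(0,2)$ and a generic orbit of size $4$ has variance $1/2$. That is the law of $\sqrt 2\,P\mathcal G$, not of $P\mathcal G$: exactly as GOE is $(\mathcal G+\mathcal G^{\top})/\sqrt2$ rather than $(\mathcal G+\mathcal G^{\top})/2$. With the correct normalization your pathwise identity becomes $\innerprod{\mathcal A}{\mathbf u\otimes\mathbf v\otimes\mathbf u\otimes\mathbf v}=\sqrt2\,\innerprod{\mathcal G}{\mathbf u\otimes\mathbf v\otimes\mathbf u\otimes\mathbf v}$, so the decoupling reduction to Theorem~\ref{th:random_tensor_nonsymmetric} yields only $\E\rho_m(\mathcal A)\le\sqrt2\,(2\sqrt m+2\sqrt n)$, which misses the stated bound by a factor of $\sqrt2$. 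Your main argument lands on $2\sqrt m+2\sqrt n$ only because two errors of size $\sqrt2$ cancel: the model is too small by $\sqrt2$, and the decoupling reduction is lossy by $\sqrt2$ (as you yourself observe).

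The fix is precisely the direct Slepian comparison you relegate to the closing remark, run on the correctly normalized tensor --- and that is the paper's actual proof. One sets $X_{\mathbf u,\mathbf v}=\innerprod{\mathcal A}{\mathbf u\otimes\mathbf v\otimes\mathbf u\otimes\mathbf v}$ and $Y_{\mathbf u,\mathbf v}=2\innerprod{\mathbf h_1}{\mathbf u}+2\innerprod{\mathbf h_2}{\mathbf v}$; the covariance structure of Definition~2.2 gives $\E|X_{\mathbf u,\mathbf v}-X_{\mathbf u',\mathbf v'}|^2=2\sum_{i,j,k,l}(\mathbf u_i\mathbf v_j\mathbf u_k\mathbf v_l-\mathbf u'_i\mathbf v'_j\mathbf u'_k\mathbf v'_l)^2\le 4\|\mathbf u-\mathbf u'\|^2+4\|\mathbf v-\mathbf v'\|^2=\E|Y_{\mathbf u,\mathbf v}-Y_{\mathbf u',\mathbf v'}|^2$, where the inequality is Lemma~\ref{lem:1} with $d=4$ and $\mathbf u_1=\mathbf u_3=\mathbf u$, $\mathbf u_2=\mathbf u_4=\mathbf v$ (equivalently, your elementary bound $2s+2t-s^2t^2\le 3$ on $[-1,1]^2$). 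Slepian then gives $\E\rho_m(\mathcal A)\le 2\sqrt m+2\sqrt n$; your ``sharper'' $\sqrt{2m}+\sqrt{2n}$ is the same estimate applied to the $1/\sqrt2$-scaled tensor, not an improvement on the theorem. So the content of your proposal is recoverable, but as written the main chain of reasoning proves a weaker statement about a differently normalized random tensor.
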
 

Let $\mathcal A\in\mathbb R^{n\times n\times n}$ satisfy $\mathcal A_{ijk}=\mathcal A_{ikj}$; $\mathcal A$ then is called a piezoelectric-type tensor. Such a set is denoted as $\C$. The largest C-eigenvalue is given by \cite{chen2017spectral}
\[
\rho_c(\mathcal A) := \max_{\bignorm{\mathbf u}=\bignorm{\mathbf v}=1,\mathbf u,\mathbf v\in\mathbb R^n}\innerprod{\mathcal A}{\mathbf u\otimes\mathbf v\otimes\mathbf v}.
\]
\begin{theorem}
	\label{th:random_tensor__piezoelectric}
	Let $\mathcal A \in  \C $ be  a Gaussian random tensor. Then
	\[
	\E \rho_c(\mathcal A) \leq 3\sqrt n.
	\]
\end{theorem}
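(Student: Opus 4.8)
The plan is to follow the same Gaussian-process comparison scheme used for Theorem \ref{th:random_tensor_nonsymmetric}. First I would write the largest C-eigenvalue as the supremum of a centered Gaussian process indexed by the pair of unit spheres: for $\mathbf u,\mathbf v\in\mathbb R^n$ with $\bignorm{\mathbf u}=\bignorm{\mathbf v}=1$, set $X_{\mathbf u,\mathbf v}:=\innerprod{\mathcal A}{\mathbf u\otimes\mathbf v\otimes\mathbf v}$, so that $\rho_c(\mathcal A)=\sup_{\mathbf u,\mathbf v}X_{\mathbf u,\mathbf v}$. The key structural observation is that the test tensor $\mathbf u\otimes\mathbf v\otimes\mathbf v$ is already symmetric in its last two modes; since a Gaussian tensor in $\C$ can be realized as the partial symmetrization (in those two modes) of a tensor $\mathcal G$ with i.i.d.\ standard normal entries, we have $\innerprod{\mathcal A}{\mathbf u\otimes\mathbf v\otimes\mathbf v}=\innerprod{\mathcal G}{\mathbf u\otimes\mathbf v\otimes\mathbf v}$ identically. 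Hence, as a process, $X$ has the clean covariance $\E[X_{\mathbf u,\mathbf v}X_{\mathbf u',\mathbf v'}]=\innerprod{\mathbf u}{\mathbf u'}\innerprod{\mathbf v}{\mathbf v'}^2$, and in particular $\E X_{\mathbf u,\mathbf v}^2=1$.

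Next I would introduce the comparison process $Y_{\mathbf u,\mathbf v}:=\innerprod{\mathbf g_1}{\mathbf u}+\innerprod{\mathbf g_2}{\mathbf v}+\innerprod{\mathbf g_3}{\mathbf v}$, where $\mathbf g_1,\mathbf g_2,\mathbf g_3\in\mathbb R^n$ are independent standard Gaussian vectors, one factor for the single $\mathbf u$-slot and one for each of the two $\mathbf v$-slots. A direct computation gives $\E[(Y_{\mathbf u,\mathbf v}-Y_{\mathbf u',\mathbf v'})^2]=\bignorm{\mathbf u-\mathbf u'}^2+2\bignorm{\mathbf v-\mathbf v'}^2$. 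The heart of the argument is verifying the Sudakov--Fernique increment inequality $\E[(X_{\mathbf u,\mathbf v}-X_{\mathbf u',\mathbf v'})^2]\le\E[(Y_{\mathbf u,\mathbf v}-Y_{\mathbf u',\mathbf v'})^2]$. Writing $a=\innerprod{\mathbf u}{\mathbf u'}$ and $b=\innerprod{\mathbf v}{\mathbf v'}$, both in $[-1,1]$, the left side equals $2-2ab^2$ and the right side equals $(2-2a)+2(2-2b)=6-2a-4b$, so the inequality reduces to $2-a-2b+ab^2\ge0$. Using $a\le1$ and $1-b^2\ge0$ one bounds $a(1-b^2)\le1-b^2$, whence $2-a-2b+ab^2=2-a(1-b^2)-2b\ge1+b^2-2b=(1-b)^2\ge0$, which settles the increment comparison.

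Finally I would invoke the Gaussian comparison (Slepian--Fernique) inequality, exactly as in the proof of Theorem \ref{th:random_tensor_nonsymmetric}, to conclude $\E\rho_c(\mathcal A)=\E\sup_{\mathbf u,\mathbf v}X_{\mathbf u,\mathbf v}\le\E\sup_{\mathbf u,\mathbf v}Y_{\mathbf u,\mathbf v}$. Because the three summands of $Y$ involve independent Gaussian vectors, subadditivity of the supremum followed by Jensen's inequality gives $\E\sup_{\mathbf u,\mathbf v}Y_{\mathbf u,\mathbf v}\le\E\bignorm{\mathbf g_1}+\E\bignorm{\mathbf g_2}+\E\bignorm{\mathbf g_3}\le3\sqrt n$, since $\E\bignorm{\mathbf g_i}\le(\E\bignorm{\mathbf g_i}^2)^{1/2}=\sqrt n$. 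I expect the main obstacle to be the structural step: correctly arguing that contracting the symmetric random tensor against the partially symmetric rank-one test tensor is distributionally identical to contracting an unstructured i.i.d.\ Gaussian tensor, which is what yields the clean covariance $ab^2$ and makes the elementary increment inequality $(1-b)^2\ge0$ available. Once that is in place, the remaining steps merely mirror the matrix Gordon argument, with the factor $3$ arising from the one $\mathbf u$-slot plus the two $\mathbf v$-slots.
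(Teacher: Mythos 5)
Your overall scheme --- one Gaussian vector per tensor slot, the Sudakov--Fernique (Slepian-type) comparison of Lemma \ref{lem:slepian}, and the elementary increment inequality --- is exactly what the paper intends (its ``proof'' is literally the sentence that the argument is the same as for the previous theorems, and your inequality $2-a-2b+ab^2\geq(1-b)^2\geq 0$ is precisely the $d=3$ case of Lemma \ref{lem:1} applied to the triple $(\mathbf u,\mathbf v,\mathbf v)$). The gap is in the structural step you yourself flagged as the main obstacle: the normalization. Under Definition \ref{def:gaussian_piezoelectric} the entries satisfy $\mathcal A_{ijj}\sim N(0,2)$ and $\mathcal A_{ijk}\sim N(0,1)$ for $j\neq k$; this is $\sqrt 2$ times the partial symmetrization $\tfrac12(\mathcal G_{ijk}+\mathcal G_{ikj})$ of an i.i.d.\ standard Gaussian tensor $\mathcal G$, not the plain symmetrization. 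Consequently $\E\bigzhongkuohao{X_{\mathbf u,\mathbf v}X_{\mathbf u^\prime,\mathbf v^\prime}}=2\innerprod{\mathbf u}{\mathbf u^\prime}\innerprod{\mathbf v}{\mathbf v^\prime}^2$ and $\E X_{\mathbf u,\mathbf v}^2=2$, not $1$. With the correct covariance the increment of $X$ is $4-4ab^2$ while that of your $Y$ is $6-2a-4b$, and the comparison fails: at $a=1$, $b=\tfrac12$ one has $4-4ab^2=3>2=6-2a-4b$. As written, your argument proves $\E\sup_{\mathbf u,\mathbf v}\innerprod{\mathcal G}{\mathbf u\otimes\mathbf v\otimes\mathbf v}\leq 3\sqrt n$ for an unstructured i.i.d.\ standard Gaussian $\mathcal G$, which is not the statement of the theorem for $\mathcal A\in\C$ as defined in the paper.

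The factor of $\sqrt 2$ cannot simply be absorbed afterwards. To dominate the increment $4-4ab^2$ by a process of the form $\alpha\innerprod{\mathbf g_1}{\mathbf u}+\beta\innerprod{\mathbf g_2}{\mathbf v}$ one is forced to take $\alpha^2\geq 2$ (let $b\to 1$) and $\beta^2\geq 4$ (let $a\to 1$, $b\to 1$), which yields the bound $(\sqrt 2+2)\sqrt n\approx 3.41\sqrt n$ rather than $3\sqrt n$; the same constant emerges if one runs the paper's own route, namely $\E|X_{\mathbf u,\mathbf v}-X_{\mathbf u^\prime,\mathbf v^\prime}|^2=2\sum_{i,j,k}(\mathbf u_i\mathbf v_j\mathbf v_k-\mathbf u^\prime_i\mathbf v^\prime_j\mathbf v^\prime_k)^2\leq 2\bignorm{\mathbf u-\mathbf u^\prime}^2+4\bignorm{\mathbf v-\mathbf v^\prime}^2$ via Lemma \ref{lem:1}. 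So the constant $3$ is obtained exactly for the variance convention your symmetrization argument implicitly assumes ($N(0,1)$ on the diagonal $j=k$ and $N(0,\tfrac12)$ off it), and the discrepancy should be resolved either by rescaling the definition of the piezoelectric-type Gaussian tensor or by accepting the larger constant $\sqrt 2+2$; in its present form the proposal does not establish the theorem as stated.
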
 

The definitions of (partially) symmetric Gaussian tensors will be given later.

 \subsection{Proofs}
 We first prove Theorem \ref{th:random_tensor_nonsymmetric}; the following lemmas are needed. 
 \begin{lemma}
 	\label{lem:1}
 	Given $\mathbf u_{j},\mathbf u^\prime_{j}\in \mathbb R^{n_j}$, $\bignorm{\mathbf u_j}=\bignorm{\mathbf u^\prime_j}=1$, $j=1,\ldots,d$, $d\geq 2$,  there holds
 	\[
 	\sum^{n_1}_{i_1=1}\cdots\sum^{n_d}_{i_d=1}\bigxiaokuohao{ \mathbf u_{1,i_1}\cdots\mathbf u_{d,i_d} - \mathbf u^\prime_{1,i_1}\cdots\mathbf u^\prime_{d,i_d}  }^2 \leq \sum^d_{j=1}\bignorm{\mathbf u_j-\mathbf u^\prime_j}^2,
 	\]
 	where $\mathbf u_{j,i}$ denotes the $i$-th entry of $\mathbf u_j $. 
 \end{lemma}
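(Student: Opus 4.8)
The plan is to recognize the left-hand side as the squared Frobenius norm of the difference of two rank-one tensors and to expand it via the inner-product structure, rather than telescoping and applying the triangle inequality (the latter loses a factor of $d$). Writing $c_j := \innerprod{\mathbf u_j}{\mathbf u^\prime_j}$, Cauchy--Schwarz gives $c_j \in [-1,1]$, and since the vectors are unit we have the identity $\bignorm{\mathbf u_j - \mathbf u^\prime_j}^2 = 2 - 2c_j$.

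First I would observe that the left-hand side equals $\bigfnorm{\bigotimes^d_{j=1}\mathbf u_j - \bigotimes^d_{j=1}\mathbf u^\prime_j}^2$, since the $(i_1,\ldots,i_d)$ entry of $\bigotimes^d_{j=1}\mathbf u_j$ is exactly $\mathbf u_{1,i_1}\cdots\mathbf u_{d,i_d}$. Expanding this squared norm and using the factorization $\innerprod{\bigotimes^d_{j=1}\mathbf a_j}{\bigotimes^d_{j=1}\mathbf b_j} = \prod^d_{j=1}\innerprod{\mathbf a_j}{\mathbf b_j}$ of the Frobenius inner product of rank-one tensors, together with $\bignorm{\mathbf u_j}=\bignorm{\mathbf u^\prime_j}=1$, the quantity collapses to $2 - 2\prod^d_{j=1}c_j$. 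The right-hand side is $\sum^d_{j=1}(2-2c_j) = 2d - 2\sum^d_{j=1}c_j$. Hence the lemma reduces to the purely scalar inequality
\[
\prod^d_{j=1} c_j - \sum^d_{j=1} c_j + (d-1) \geq 0, \qquad c_j \in [-1,1].
\]

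To establish this inequality I would exploit that the function $f(c_1,\ldots,c_d) := \prod_j c_j - \sum_j c_j + (d-1)$ is affine in each coordinate separately, because the product term is multilinear. A function affine in each variable on a box attains its minimum at a vertex, so it suffices to verify $f \geq 0$ at the points where every $c_j \in \{-1,1\}$. Parametrizing such a vertex by the number $s$ of coordinates equal to $-1$ gives $\prod_j c_j = (-1)^s$ and $\sum_j c_j = d - 2s$, whence $f = (-1)^s + 2s - 1$; this vanishes for $s \in \{0,1\}$ and is strictly positive otherwise. Thus $f \geq 0$ throughout $[-1,1]^d$, completing the argument.

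The main obstacle is conceptual rather than computational: the naive telescoping identity followed by the triangle inequality yields only $\bigxiaokuohao{\sum_j \bignorm{\mathbf u_j - \mathbf u^\prime_j}}^2 \le d\sum_j\bignorm{\mathbf u_j-\mathbf u^\prime_j}^2$, which is too weak by the factor $d$; the exact inner-product expansion is what removes this slack and reduces everything to the elementary multilinear inequality. As an alternative to the vertex argument one could substitute $a_j := 1 - c_j \in [0,2]$ and prove $1 - \prod_j(1-a_j) \le \sum_j a_j$ directly by induction on $d$, but the multilinearity argument is cleaner and sidesteps any case analysis on the signs of the $c_j$.
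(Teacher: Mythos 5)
Your proof is correct, but it takes a genuinely different route from the paper's. The paper proceeds by induction on $d$: it first verifies the $d=2$ case by showing the deficit equals $2\bigxiaokuohao{1-\innerprod{\mathbf u_1}{\mathbf u_1^\prime}}\bigxiaokuohao{1-\innerprod{\mathbf u_2}{\mathbf u_2^\prime}}\geq 0$, then for general $d$ groups the first $m$ factors into a single unit vector $\mathbf v=\bigotimes^m_{j=1}\mathbf u_j$ and applies the $d=2$ case followed by the inductive hypothesis. You instead collapse the entire left-hand side in one step to $2-2\prod^d_{j=1}c_j$ with $c_j=\innerprod{\mathbf u_j}{\mathbf u_j^\prime}$, reduce the lemma to the scalar inequality $\prod_j c_j-\sum_j c_j+(d-1)\geq 0$ on $[-1,1]^d$, and settle that by multilinearity plus a vertex check (where one argues that the partial minimum is concave in each remaining variable, hence attained at an endpoint). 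The two arguments are close cousins --- the paper's base case is exactly the $d=2$ instance of your scalar inequality, and its induction plays the role of your coordinate-by-coordinate minimization --- but yours makes the extremal structure explicit (equality forced at vertices with at most one $c_j=-1$) and cleanly isolates where the unit-norm hypothesis enters, while the paper's version reuses its $d=2$ computation as a black box and never needs to discuss where the minimum of a multilinear form lives. Both are complete and correct.
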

\begin{proof}
We use an induction method to show the results. When $d=2$, the required inequality follows from the following relation
\begin{eqnarray}
&&\bignorm{\mathbf u_1-\mathbf u_1^\prime}^2 + \bignorm{\mathbf u_2-\mathbf u_2^\prime}^2 - \sum^{n_1}_{i_1=1}\sum^{n_2}_{i_2=1}\bigxiaokuohao{\mathbf u_{1,i_1}\mathbf u_{2,i_2} - \mathbf u_{1,i_2}^\prime\mathbf u_{2,i_2}^\prime   }^2 \nonumber\\
&=& 2\bigxiaokuohao{ 1- \innerprod{\mathbf u_1}{\mathbf u_1^\prime} - \innerprod{\mathbf u_2}{\mathbf u_2^\prime} + \innerprod{\mathbf u_1}{\mathbf u_1^\prime}\innerprod{\mathbf u_2}{\mathbf u_2^\prime}   } \geq 0.
\end{eqnarray}
Assume that the inequality holds when $d=m\geq 2$. When $d=m+1$, we denote $\mathbf v:= \bigotimes^m_{j=1}\mathbf u_j$; correspondingly, $\mathbf v^\prime:= \bigotimes^m_{j=1}\mathbf u_j^\prime$. There holds
\begin{eqnarray}
&&	\sum^{n_1}_{i_1}\cdots\sum^{n_d}_{i_d=1}\bigxiaokuohao{ \mathbf u_{1,i_1}\cdots\mathbf u_{d,i_d} - \mathbf u^\prime_{1,i_1}\cdots\mathbf u^\prime_{d,i_d}  }^2 \nonumber\\
	 &=& \sum^{n_1\cdots n_m}_{i=1}\sum^{n_{m+1}}_{i_{m+1}=1}\bigxiaokuohao{\mathbf v_i\mathbf u_{m+1,i_{m+1}} - \mathbf v_i^\prime\mathbf u_{m+1,i_{m+1}}^\prime   }^2 \nonumber\\
	&\leq& \bignorm{\mathbf v-\mathbf v^\prime}^2 + \bignorm{\mathbf u_{m+1}-\mathbf u^\prime_{m+1}}^2 \label{eq:proof:1}\\
	&=& \sum^{n_1}_{i_1=1}\cdots\sum^{n_m}_{i_m=1}\bigxiaokuohao{ \mathbf u_{1,i_1}\cdots\mathbf u_{m,i_m} -\mathbf u_{1,i_1}^\prime\cdots\mathbf u_{m,i_m}^\prime  }^2 + \bignorm{\mathbf u_{m+1}-\mathbf u^\prime_{m+1}}^2 \label{eq:proof:2} \\
	&\leq& \bignorm{\mathbf u_1 -\mathbf u_1^\prime}^2 + \cdots + \bignorm{\mathbf u_{m+1}-\mathbf u_{m+1}^\prime}^2, \label{eq:proof:3}
\end{eqnarray}
where \eqref{eq:proof:1} follows from the $d=2$ case, \eqref{eq:proof:2} is due to the definition of $\mathbf v$ and $\mathbf v^\prime$, and \eqref{eq:proof:3} is due to the assumption that $d=m$ holds. Thus induction method tells us that the inequality in question holds for all $d\geq 2$. This completes the proof.
\end{proof}

\begin{lemma}
	\label{lem:1_H}
	Given $\mathbf u_{j},\mathbf u^\prime_{j}\in \mathbb R^{n_j}$, $\bignorm{\mathbf u_j}_k=\bignorm{\mathbf u^\prime_j}_k=1$, $j=1,\ldots,d$, $d\geq 2$,  and $k\geq 3$ is an integer,  there holds
		\[
		\sum^{n}_{i_1,\ldots,i_d=1} \bigxiaokuohao{ \mathbf u_{i_1}\cdots\mathbf u_{i_d} - \mathbf u^\prime_{i_1}\cdots\mathbf u^\prime_{i_d}  }^2 \leq 2^{d-1} \bigxiaokuohao{\sum^d_{j=1} \prod_{i\neq j}^{d} n_i^{\frac{k-2}{k}} \bignorm{\mathbf u_j-\mathbf u_j^{\prime}}^2 }.
		\]
\end{lemma}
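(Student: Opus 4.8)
The plan is to run the same induction on $d$ as in Lemma~\ref{lem:1}, with two changes dictated by the hypothesis: the vectors are now normalized in the $\ell^k$-norm rather than the $\ell^2$-norm, and the Euclidean factors that appear must therefore be converted through a norm comparison. The starting point is the identification of the left-hand side with $\bigfnorm{\bigotimes^d_{j=1}\mathbf u_j - \bigotimes^d_{j=1}\mathbf u_j'}^2$, the squared Frobenius norm of the difference of two rank-one tensors (reading the summand as $\mathbf u_{1,i_1}\cdots\mathbf u_{d,i_d}-\mathbf u_{1,i_1}'\cdots\mathbf u_{d,i_d}'$), so that the whole estimate is a Lipschitz bound for the rank-one map measured against the $\ell^k$-unit spheres. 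Two elementary tools drive everything: the two-factor splitting
\[
\bigfnorm{\mathbf a\otimes\mathbf b - \mathbf a'\otimes\mathbf b'}^2 \leq 2\bignorm{\mathbf a-\mathbf a'}^2\bignorm{\mathbf b}^2 + 2\bignorm{\mathbf a'}^2\bignorm{\mathbf b-\mathbf b'}^2,
\]
valid for vectors of any dimensions (from $\mathbf a\otimes\mathbf b - \mathbf a'\otimes\mathbf b' = (\mathbf a-\mathbf a')\otimes\mathbf b + \mathbf a'\otimes(\mathbf b-\mathbf b')$ and $\bignorm{x+y}^2\leq 2\bignorm{x}^2+2\bignorm{y}^2$); and the power-mean comparison $\bignorm{\mathbf v}\leq n^{\frac{1}{2}-\frac{1}{k}}\bignorm{\mathbf v}_k$ for $\mathbf v\in\mathbb R^n$, $k\geq 2$, which turns an $\ell^k$-unit vector into the Euclidean bound $\bignorm{\mathbf v}^2\leq n^{\frac{k-2}{k}}$.

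For the base case $d=2$ I would apply the splitting to $\mathbf u_1\otimes\mathbf u_2 - \mathbf u_1'\otimes\mathbf u_2'$ and then invoke the power-mean comparison on the two surviving factors, $\bignorm{\mathbf u_2}^2\leq n_2^{\frac{k-2}{k}}$ and $\bignorm{\mathbf u_1'}^2\leq n_1^{\frac{k-2}{k}}$; this reproduces the claim with constant $2=2^{2-1}$. For the inductive step, assuming the case $d=m$ and writing $\mathbf v:=\bigotimes^m_{j=1}\mathbf u_j$, $\mathbf v':=\bigotimes^m_{j=1}\mathbf u_j'$, I would apply the same splitting to $\mathbf v\otimes\mathbf u_{m+1} - \mathbf v'\otimes\mathbf u_{m+1}'$. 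The term $2\bigfnorm{\mathbf v-\mathbf v'}^2\bignorm{\mathbf u_{m+1}}^2$ is controlled by the induction hypothesis (which contributes $2^{m-1}$ and the partial product over $i\neq j$, $i\leq m$) together with $\bignorm{\mathbf u_{m+1}}^2\leq n_{m+1}^{\frac{k-2}{k}}$, so the extra factor $2$ upgrades $2^{m-1}$ to $2^m$ and the product picks up the new index $m+1$. The term $2\bigfnorm{\mathbf v'}^2\bignorm{\mathbf u_{m+1}-\mathbf u_{m+1}'}^2$ is controlled by $\bigfnorm{\mathbf v'}^2=\prod^m_{j=1}\bignorm{\mathbf u_j'}^2\leq\prod^m_{j=1}n_j^{\frac{k-2}{k}}$, and since $2\leq 2^m$ it too fits under the common constant $2^m=2^{(m+1)-1}$. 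Adding the two contributions gives exactly the $d=m+1$ statement.

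I expect the main difficulty to be bookkeeping rather than conceptual: one must check that, as the final mode is appended, the partial product $\prod_{i\neq j}n_i^{\frac{k-2}{k}}$ gains precisely the factor $n_{m+1}^{\frac{k-2}{k}}$ for each $j\leq m$ and the full product $\prod^m_{j=1}n_j^{\frac{k-2}{k}}$ for the new index $j=m+1$, and that both terms from the splitting are dominated by the single constant $2^{d-1}$ (the only slack being $2\leq 2^m$, harmless for $m\geq 2$). The hypothesis on $k$ enters only through the power-mean comparison, where one needs $\frac{1}{2}-\frac{1}{k}\geq 0$ so that the inequality points the right way and the exponent $\frac{k-2}{k}$ is nonnegative; note that at $k=2$ the estimate degenerates into a weaker version of Lemma~\ref{lem:1}, which is why the genuinely new regime is $k\geq 3$.
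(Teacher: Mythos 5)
Your proposal is correct and follows essentially the same route as the paper: induction on $d$ with the two-term splitting $2\bignorm{\mathbf a-\mathbf a'}^2\bignorm{\mathbf b}^2+2\bignorm{\mathbf a'}^2\bignorm{\mathbf b-\mathbf b'}^2$ applied to $\mathbf v\otimes\mathbf u_{m+1}$, combined with the norm comparison $\max_{\bignorm{\mathbf u}_k=1}\bignorm{\mathbf u}^2=n^{\frac{k-2}{k}}$. The only difference is cosmetic (you make the telescoping decomposition behind the splitting explicit, which the paper leaves implicit, and you assign the primes to the opposite factors, which changes nothing).
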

\begin{proof}
	The result that $\max_{\bignorm{\mathbf u_j}_k=1 }\bignorm{\mathbf u_j}^2 = n_j^{\frac{k-2}{k}}$ will be used. 
	
		When $d=2$, 
	\begin{align*}
		\sum^{n}_{i_1,i_2=1}\bigxiaokuohao{\mathbf u_{1,i_1}\mathbf u_{2,i_2} - \mathbf u_{1,i_2}^\prime\mathbf u_{2,i_2}^\prime   }^2 &\leq 2 \bignorm{\mathbf u_2}^2\bignorm{\mathbf u_1-\mathbf u^{\prime}_1} + 2\bignorm{\mathbf u^{\prime}_1}^2\bignorm{\mathbf u_2-\mathbf u^{\prime}_2}^2\\
		&\leq 2 n_2^{\frac{k-2}{k}} \bignorm{\mathbf u_1-\mathbf u^{\prime}_1}^2 + 2n_1^{\frac{k-2}{k}}\bignorm{\mathbf u_2-\mathbf u_2^{\prime}}^2 .
	\end{align*}
	Assume that when $d=m$ the assertion holds. Denote $\mathbf v,\mathbf v^{\prime}$ similar to those in Lemma \ref{lem:1}. When $d=m+1$,
	\begin{align*}
	&	\sum^{n_1}_{i_1=1}\cdots\sum^{n_d}_{i_j=1} \bigxiaokuohao{ \mathbf u_{1,i_1}\cdots\mathbf u_{d,i_d} - \mathbf u^\prime_{1,i_1}\cdots\mathbf u^\prime_{d,i_d}  }^2 \\
		\leq &~2\bignorm{\mathbf v}^2\bignorm{\mathbf u_{m+1}-\mathbf u^{\prime}_{m+1}}^2 + 2\bignorm{\mathbf u^{\prime}_{m+1}}^2\bignorm{\mathbf v-\mathbf v^{\prime}}^2\\
		\leq &~2 \prod^m_{j=1}n_j^{\frac{k-2}{k}}  \bignorm{\mathbf u_{m+1} -\mathbf u^{\prime}_{m+1} }^2 + 2 n_{m+1}^{\frac{k-2}{k}}\cdot 2^{m-1} \bigxiaokuohao{ \sum^m_{j=1}\prod_{i\neq j}^{m} n_i^{\frac{k-2}{k}}\bignorm{\mathbf u_j-\mathbf u_j^{\prime}}^2  }  \\
		\leq &~ 2^m  \bigxiaokuohao{\sum^{m+1}_{j=1}\prod^{m+1}_{i\neq j}n_i^{\frac{k-2}{k}}\bignorm{\mathbf u_j-\mathbf u_j^{\prime}}^2  } ,
	\end{align*}
	in which $\max_{\bignorm{\mathbf u_1}_k=\cdots=\bignorm{\mathbf u_m}_k=1 }\bignorm{\mathbf v}^2 = \max_{\bignorm{\mathbf u_1}_k=\cdots=\bignorm{\mathbf u_m}_k=1 } \prod^m_{j=1}\bignorm{\mathbf u_j}^2 = \prod^m_{j=1}n_j^{\frac{k-2}{k}}$. The result follows. 
\end{proof}

 The proof is also relied on the Slepian’s inequality for Gaussian processes, which is stated in the following lemma. Note that a Gaussian process $(X_t)_{t\in T}$ is a collection of centered normal random variables $X_t$ on the same probability space, indexed by points t in an abstract set $ T$.
\begin{lemma}(\cite[Sect. 3.3]{ledoux2013probability}) \label{lem:slepian}
   Consider two Gaussian processes $\left(X_{t}\right)_{t \in T}$         and  $\left(Y_{t}\right)_{t \in T} $   
          whose increments satisfy the inequality  
         $ \mathbb{E}\left|X_{s}-X_{t}\right|^{2} \leq \mathbb{E} \left| Y_{s}-    Y_{t}\right|^{2} $
            for all  $s, t \in T$.    Then  
            \[
            \mathbb{E} \sup _{t \in T} X_{t} \leq \mathbb{E} \sup _{t \in T} Y_{t}.
            \]
\end{lemma}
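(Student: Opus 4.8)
The plan is to reprove this comparison inequality in its stated Sudakov--Fernique form (only the increments are compared) by the classical Gaussian interpolation argument, since the cited reference records it without proof in the present generality. The first reduction is to assume $T$ is finite: for a separable process one has $\E\sup_{t\in T}X_t=\sup_{F}\E\max_{t\in F}X_t$ over finite $F\subset T$, and likewise for $Y$, so it suffices to prove $\E\max_{t\in F}X_t\le\E\max_{t\in F}Y_t$ for each finite $F$. Fixing such an $F$ with $|F|=N$, I regard $X=(X_t)_{t\in F}$ and $Y=(Y_t)_{t\in F}$ as centered Gaussian vectors in $\mathbb R^N$ with covariance matrices $\Sigma^X,\Sigma^Y$. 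Since both the hypothesis and the conclusion depend only on the separate laws of $X$ and of $Y$, I may realize them as \emph{independent} vectors on one probability space. To smooth the maximum I replace it by the soft-max $f_\beta(x)=\beta^{-1}\log\sum_{t}e^{\beta x_t}$, which satisfies $\max_t x_t\le f_\beta(x)\le\max_t x_t+\beta^{-1}\log N$ and has gradient $\partial_i f_\beta=p_i$ and Hessian $\partial_{ij}f_\beta=\beta(p_i\delta_{ij}-p_ip_j)$, where $p_i=e^{\beta x_i}/\sum_k e^{\beta x_k}$ are the soft-max weights.

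Next I would interpolate: put $Z(\lambda)=\sqrt{\lambda}\,X+\sqrt{1-\lambda}\,Y$ for $\lambda\in[0,1]$ and $\varphi(\lambda)=\E f_\beta(Z(\lambda))$, so that $\varphi(1)=\E f_\beta(X)$, $\varphi(0)=\E f_\beta(Y)$, and the goal becomes $\varphi'\le0$. Differentiating under the expectation gives $\varphi'(\lambda)=\E\sum_i\partial_i f_\beta(Z)\,\bigl(\tfrac{1}{2\sqrt\lambda}X_i-\tfrac{1}{2\sqrt{1-\lambda}}Y_i\bigr)$. The key tool is Gaussian integration by parts (Stein's identity): for jointly centered Gaussian $U$ and $Z$ one has $\E[U\,g(Z)]=\sum_j\E[U Z_j]\,\E[\partial_j g(Z)]$. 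As $X$ and $Y$ are independent, $\E[X_iZ_j]=\sqrt\lambda\,\Sigma^X_{ij}$ and $\E[Y_iZ_j]=\sqrt{1-\lambda}\,\Sigma^Y_{ij}$, so the two scalings cancel the square roots and I obtain the covariance-difference formula $\varphi'(\lambda)=\tfrac12\sum_{i,j}(\Sigma^X_{ij}-\Sigma^Y_{ij})\,\E[\partial_{ij}f_\beta(Z)]$.

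It remains to insert the Hessian and invoke the hypothesis. Writing $\Gamma=\Sigma^X-\Sigma^Y$ and substituting $\partial_{ij}f_\beta=\beta(p_i\delta_{ij}-p_ip_j)$ gives $\varphi'(\lambda)=\tfrac{\beta}{2}\,\E\bigl[\sum_i\Gamma_{ii}p_i-\sum_{ij}\Gamma_{ij}p_ip_j\bigr]$. Because $\sum_i p_i=1$, the symmetrization $\sum_i\Gamma_{ii}p_i=\tfrac12\sum_{ij}(\Gamma_{ii}+\Gamma_{jj})p_ip_j$ rewrites the bracket as $\tfrac12\sum_{ij}(\Gamma_{ii}+\Gamma_{jj}-2\Gamma_{ij})p_ip_j$. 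The coefficient $\Gamma_{ii}+\Gamma_{jj}-2\Gamma_{ij}$ is exactly $\E|X_i-X_j|^2-\E|Y_i-Y_j|^2$, which is $\le0$ by the increment hypothesis, while $p_ip_j\ge0$; hence the integrand is nonpositive, $\varphi'(\lambda)\le0$, and $\E f_\beta(X)\le\E f_\beta(Y)$. Letting $\beta\to\infty$ via dominated convergence (using $0\le f_\beta-\max_t\le\beta^{-1}\log N$) yields $\E\max_t X_t\le\E\max_t Y_t$, and the finite-set reduction completes the argument. I expect the main obstacle to be not any single estimate but the rigorous justification of the two analytic steps — differentiating $\varphi$ under the expectation and applying Gaussian integration by parts — which requires controlling the bounded soft-max derivatives against the Gaussian tails; the remainder is the bookkeeping that converts the covariance difference into the increment condition through the symmetrization identity.
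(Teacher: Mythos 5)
Your proposal is correct, but there is no internal proof to compare it against: the paper states this lemma with a citation to \cite[Sect.~3.3]{ledoux2013probability} and uses it as a black box. What you have written out is the standard Gaussian interpolation proof of the Sudakov--Fernique comparison theorem (the increments-only form, with no assumption of equal variances), and the steps check out: the covariance-difference formula $\varphi'(\lambda)=\frac12\sum_{i,j}(\Sigma^X_{ij}-\Sigma^Y_{ij})\,\mathbb{E}[\partial_{ij}f_\beta(Z(\lambda))]$ follows from Stein's identity exactly as you say (independence of $X$ and $Y$ makes $(X_i,Z(\lambda))$ jointly Gaussian and cancels the $\sqrt{\lambda}$, $\sqrt{1-\lambda}$ factors), and the symmetrization $\sum_i\Gamma_{ii}p_i=\frac12\sum_{i,j}(\Gamma_{ii}+\Gamma_{jj})p_ip_j$ converts the bracket into $\frac12\sum_{i,j}\bigl(\mathbb{E}|X_i-X_j|^2-\mathbb{E}|Y_i-Y_j|^2\bigr)p_ip_j\le 0$. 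Notably, you proved the statement in exactly the strength the paper needs: in all of its applications the variances of the two processes differ (e.g.\ in Theorem \ref{th:random_tensor_nonsymmetric}, $\mathbb{E}X_{\mathbf u_j}^2=1$ while $\mathbb{E}Y_{\mathbf u_j}^2=d$), so the classical Slepian inequality with matched variances would not apply, and the increments-only hypothesis is essential. Three small hygiene points: the reduction $\mathbb{E}\sup_{t\in T}X_t=\sup_F\mathbb{E}\max_{t\in F}X_t$ requires separability of the processes (harmless here, since in every application $T$ is a compact product of spheres and both processes have continuous paths; in general the lemma should be read with $\mathbb{E}\sup$ interpreted as the lattice supremum over finite subsets); $\varphi$ is differentiable only on the open interval $(0,1)$ because of the $\lambda^{-1/2}$ and $(1-\lambda)^{-1/2}$ factors, so you should invoke continuity of $\varphi$ on $[0,1]$ together with $\varphi'\le 0$ on $(0,1)$ --- both routine given the bounded gradient of $f_\beta$ and Gaussian integrability, as you anticipate; and the final limit $\beta\to\infty$ needs no dominated convergence, since the sandwich $\mathbb{E}\max_t X_t\le\mathbb{E}f_\beta(X)\le\mathbb{E}f_\beta(Y)\le\mathbb{E}\max_t Y_t+\beta^{-1}\log N$ yields the conclusion directly.
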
 
 
 With Lemmas \ref{lem:1} and \ref{lem:slepian}, we can prove Theorem \ref{th:random_tensor_nonsymmetric}.
 \begin{proof}[Proof of Theorem \ref{th:random_tensor_nonsymmetric}]
Denote 
\[
T:= \{\mathbf u_1\in\mathbb R^{n_1}\mid\bignorm{\mathbf u_1}=1  \} \times \cdots\times \{\mathbf u_d\in\mathbb R^{n_d}\mid \bignorm{\mathbf u_d}=1   \}.
\]
We also define
\begin{equation}\label{def:Xu}
X_{\mathbf u_j}:=\innerprod{\mathcal A}{\bigotimes^d_{j=1}\nolimits\mathbf u_j} 
\end{equation}
and
\[
Y_{\mathbf u_j}:= \innerprod{\mathbf h_1}{\mathbf u_1} + \cdots+\innerprod{\mathbf h_d}{\mathbf u_d},
\]
where $\mathbf h_j\in\mathbb R^{n_j}$, $j=1,\ldots,d$ are independent standard Gaussian random vectors. 
It is clear that $(X_{\mathbf u_j})_{(\mathbf u_1,\ldots,\mathbf u_d)\in T  }$ and 
$(Y_{\mathbf u_j})_{(\mathbf u_1,\ldots,\mathbf u_d)\in T }$ are Gaussian process. It also holds that
\[
\mathbb E X_{\mathbf u_j} = 0,~{\rm and}~\E  Y_{\mathbf u_j}=0.
\]
Moreover, given $(\mathbf u_1,\ldots,\mathbf u_d)\in T,(\mathbf u_1^\prime,\ldots,\mathbf u^\prime_d)\in T$, 
we have
\begin{eqnarray}
\E\bigjueduizhi{ X_{\mathbf u_j} - X_{\mathbf u^\prime_{j}}  }^2 &=& \E \innerprod{\mathcal A}{\bigotimes^d_{j=1}\mathbf u_j}^2 - 2 \E\innerprod{\mathcal A}{\bigotimes^d_{j=1}\mathbf u_j}\innerprod{\mathcal A}{\bigotimes^d_{j=1}\mathbf u_j^{\prime}} + \E\innerprod{\mathcal A}{\bigotimes^d_{j=1}\mathbf u_j^{\prime}}^2 \nonumber\\
&=& \prod^d_{j=1}\innerprod{\mathbf u_j}{\mathbf u_j}  - 2\prod^d_{j=1}\innerprod{\mathbf u_{j}}{\mathbf u^\prime_j} + \prod^d_{j=1}\innerprod{\mathbf u_j^\prime}{\mathbf u_j^\prime} \nonumber\\
&=&  	\sum^{n_1}_{i_1}\cdots\sum^{n_d}_{i_d=1}\bigxiaokuohao{ \mathbf u_{1,i_1}\cdots\mathbf u_{d,i_d} - \mathbf u^\prime_{1,i_1}\cdots\mathbf u^\prime_{d,i_d}  }^2, \label{eq:proof:4}
\end{eqnarray}
where the second equality follows from   that all the entries of $\mathcal A$ are independent standard normal variables.  On the other hand, it follows from the definition of $Y_{\mathbf u_j}$ that
\begin{eqnarray} \label{eq:proof:5}
\E \bigjueduizhi{Y_{\mathbf u_j} - Y_{\mathbf u^\prime_j}}^2 &=& \sum^d_{j=1}\bigxiaokuohao{\bignorm{\mathbf u_j}^2 - 2\innerprod{\mathbf u_j}{\mathbf u_j^\prime} + \bignorm{\mathbf u^\prime_j}^2}\nonumber\\
&=& \sum^d_{j=1}\bignorm{\mathbf u_j - \mathbf u^\prime_j}^2.
\end{eqnarray}
  \eqref{eq:proof:4} and \eqref{eq:proof:5} in connection with Lemma \ref{lem:1} yields
  \[
  \E\bigjueduizhi{ X_{\mathbf u_j} - X_{\mathbf u^\prime_{j}}  }^2  \leq \E \bigjueduizhi{Y_{\mathbf u_j} - Y_{\mathbf u^\prime_j}}^2,
  \]
  which together with Lemma \ref{lem:slepian} shows that
  \begin{eqnarray*}
  \E \rho(\mathcal A)	&=& \E\sup_{ (\mathbf u_1,\ldots,\mathbf u_d)\in T    }X_{\mathbf u_j} \leq \E \sup_{ (\mathbf u_1,\ldots,\mathbf u_d)\in T   }Y_{\mathbf u_j} \nonumber\\
  &\leq& \E\bignorm{\mathbf h_1} + \cdots + \E \bignorm{\mathbf h_d}\nonumber\\
  &\leq& \sqrt n_1 + \cdots + \sqrt n_d,
  \end{eqnarray*}
  where the last inequality follows from Jensen's inequality.

  To estimate $\rho_{\ell^d}(\mathcal A)$, we keep $X_{\mathbf u_j}$ as that in \eqref{def:Xu}, while   redefine $Y_{\mathbf u_j}$ as
  \[
Y_{\mathbf u_j}:= \alpha_1  \innerprod{\mathbf h_1}{\mathbf u_1} +   \cdots + \alpha_d\innerprod{\mathbf h_d}{\mathbf u_d},~{\rm with}~ \alpha_j:= 2^{\frac{d-1}{2}}\prod^d_{i\neq j}n_i^{\frac{d-2}{2d}}.
  \]
  Then $\E Y_{\mathbf u_j}=0$.  Correspondingly, we redefine
  \[
  T:= \{\mathbf u_1\in\mathbb R^{n_1}\mid\bignorm{\mathbf u_1}_d=1  \} \times \cdots\times \{\mathbf u_d\in\mathbb R^{n_d}\mid \bignorm{\mathbf u_d}_d=1   \}.
  \]
  
  For $(\mathbf u_1,\ldots,\mathbf u_d)\in T$, it is clear that \eqref{eq:proof:4} still holds for $\E X_{\mathbf u_j}$, while 
  \[
  \E \bigjueduizhi{ Y_{\mathbf u_j}-Y_{\mathbf u_j}^{\prime}}^2 = 2^{d-1}\bigxiaokuohao{ \sum^d_{j=1} \prod^d_{i\neq j}n_i^{\frac{d-2}{d}} \bignorm{\mathbf u_j-\mathbf u_j^{\prime}}^2   }. 
  \]
  Thus it follows from  Lemma \ref{lem:1_H}  that we still have
  $
    \E\bigjueduizhi{ X_{\mathbf u_j} - X_{\mathbf u^\prime_{j}}  }^2  \leq \E \bigjueduizhi{Y_{\mathbf u_j} - Y_{\mathbf u^\prime_j}}^2.
  $
  
  To estimate $\rho_{\ell^d}(\mathcal A)$, it suffices to compute $\E\sup_{\bignorm{\mathbf u_j}_d=1}\innerprod{\mathbf h_j}{\mathbf u_j}$. Define $p:=\frac{d}{d-1}$. From the definition of the dual norm, we have $\E \sup_{\bignorm{\mathbf u_j}_d=1}\innerprod{\mathbf h_j}{\mathbf u_j} = \E \bignorm{\mathbf h_j}_p$. Since $p>1$, by Yensen's inequality, $\bigxiaokuohao{\E \bignorm{\mathbf h_j}_p}^p \leq \E \bignorm{\mathbf h_j}^p_p = \E \sum^n_{i_j=1} |\mathbf h_{j,i_j }|^p$, while  for any $i_j$, 
  \begin{align*}
  	\E |\mathbf h_{j,i_j}|^p &= \frac{1}{\sqrt{2\pi}}\int_{+\infty}^{-\infty} |x|^p \exp\bigxiaokuohao{ -\frac{x^2}{2}  } dx\\ 
  	&= \frac{2}{\sqrt{2\pi}}\int_{0}^{+\infty} x^p \exp\bigxiaokuohao{ -\frac{x^2}{2}  } dx\\
  	&= 2^{ \frac{p-1}{2} }\cdot \frac{2}{\sqrt{2\pi}} \int_{0}^{+\infty} \bigxiaokuohao{ y }^{\frac{p-1}{2}+1-1} \exp\bigxiaokuohao{ -y  } d y\\
  	&=    {\frac{2^{ \frac{d}{2(d-1)} }}{\sqrt \pi}}\Gamma\bigxiaokuohao{\frac{1}{2(d-1)}+1}.
  \end{align*}
Therefore, 
  \begin{align*}
  	\rho_{\ell^d}(\mathcal A)	&=\E\sup_{ (\mathbf u_1,\ldots,\mathbf u_d)\in T}X_{\mathbf u_j} \leq  \E \sup_{(\mathbf u_1,\ldots,\mathbf u_d)\in T} Y_{\mathbf u_j} \\
  	=&  \alpha_1\E  \sup_{\bignorm{\mathbf u_1}_d =1} \innerprod{\mathbf h_1}{\mathbf u_1} + \cdots + \alpha_d \sup_{ \bignorm{\mathbf u_d}_d =1}\innerprod{\mathbf h_d}{\mathbf u_d}\\
  	  =& \alpha_1\E\bignorm{\mathbf h_1}_p + \cdots + \alpha_d\E\bignorm{\mathbf h_d}_p \\
  	\leq &\alpha_1 \bigxiaokuohao{\E \sum^{n_1}_{i_1=1}|\mathbf h_{1,i_1}|^p  }^{\frac{1}{p}} + \cdots + \alpha_d  \bigxiaokuohao{\E \sum^{n_d}_{i_d=1}|\mathbf h_{d,i_d}|^p  }^{\frac{1}{p}}   \\
  	= &\sum^d_{j=1}2^{\frac{d-1}{2}}\prod^d_{i\neq j}n_i^{\frac{d-2}{2d}} \cdot\bigxiaokuohao{  {\frac{2^{ \frac{d}{2(d-1)} }}{\sqrt \pi}}\Gamma\bigxiaokuohao{\frac{1}{2(d-1)}+1} \cdot n_j   }^{\frac{d-1}{d}} \\
  	=& 2^{\frac{d}{2}} \bigxiaokuohao{ \pi^{-\frac{1}{2}} \Gamma( \frac{1}{2(d-1)} +1)  }^{\frac{d-1}{d}}\prod^d_{j=1}n_j^{\frac{d-2}{2d}}\sum^d_{j=1}n_j^{\frac{1}{2}}   . 
  \end{align*}

To see the last inequality of Theorem \ref{th:random_tensor_nonsymmetric}, since $p<2$, by Yensen's inequality, $\bigxiaokuohao{\E |\mathbf h_{j,i_j}|^p}^{1/p}\leq \bigxiaokuohao{\E |{\mathbf h_{j,i_j}}|^2}^{1/2}$, and so 
\[
\bigxiaokuohao{\E \sum^{n_j}_{i_j=1}\nolimits|\mathbf h_{j,i_j}|^p  }^{\frac{1}{p}} \leq n_j^{\frac{1}{p}-\frac{1}{2}}\bigxiaokuohao{\E \sum^{n_j}_{i_j=1}\nolimits|\mathbf h_{j,i_j}|^2  }^{\frac{1}{2}}=n_j^{\frac{d-1}{d}}.
\]
  This together with the definition of $\alpha_j$ gives the last inequality. 
  The proof has been completed.
 \end{proof}

We then present the concentration inequality for $\rho({\mathcal A})$. The following propositions are useful. 
 \begin{proposition}
 	\label{lem:1-lip}
 	The largest singular value function $\rho({\cdot}):\T \rightarrow \mathbb R$ is   Lipschitz continuous   with Lipschitz constant $1$.
 \end{proposition}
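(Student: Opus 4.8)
The plan is to exploit the fact that $\rho(\cdot)$ is a pointwise maximum of \emph{linear} functionals of $\mathcal A$, each of which is $1$-Lipschitz with respect to the Frobenius norm, and then to invoke the elementary principle that a supremum of $L$-Lipschitz functions is again $L$-Lipschitz. Throughout, the Lipschitz constant is understood relative to the Frobenius norm $\bigfnorm{\cdot}$ on $\T$.

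First I would record the normalization fact that makes the constant exactly $1$: for unit vectors $\mathbf u_j$ with $\bignorm{\mathbf u_j}=1$, the rank-one tensor $\bigotimes^d_{j=1}\mathbf u_j$ satisfies $\bigfnorm{\bigotimes^d_{j=1}\mathbf u_j}^2 = \prod_{j=1}^d \bignorm{\mathbf u_j}^2 = 1$, since the squared Frobenius norm of an outer product factorizes across the modes. Next, fix $\mathcal A,\mathcal B\in\T$ and let $(\mathbf u_1,\ldots,\mathbf u_d)$ with $\bignorm{\mathbf u_j}=1$ attain the maximum defining $\rho(\mathcal A)$ (attainment holds by compactness of the product of unit spheres). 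Writing $\mathcal A=\mathcal B+(\mathcal A-\mathcal B)$ and using bilinearity of the inner product gives
\[
\rho(\mathcal A) = \innerprod{\mathcal B}{\bigotimes^d_{j=1}\mathbf u_j} + \innerprod{\mathcal A-\mathcal B}{\bigotimes^d_{j=1}\mathbf u_j} \leq \rho(\mathcal B) + \bigfnorm{\mathcal A-\mathcal B},
\]
where the first term is bounded by $\rho(\mathcal B)$ directly from the definition of $\rho$, and the second is bounded by Cauchy--Schwarz together with the unit-norm fact above. Exchanging the roles of $\mathcal A$ and $\mathcal B$ yields the reverse inequality, so that $\bigjueduizhi{\rho(\mathcal A)-\rho(\mathcal B)} \leq \bigfnorm{\mathcal A-\mathcal B}$, which is precisely the claim.

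There is essentially no hard step: the argument is the standard ``supremum of $1$-Lipschitz functions'' lemma specialized to this setting. The only point requiring minor care is verifying that the maximizing rank-one tensor has Frobenius norm exactly $1$, so that Cauchy--Schwarz delivers the constant $1$ rather than a larger dimension-dependent factor; everything else reduces to linearity and the symmetry between $\mathcal A$ and $\mathcal B$. I would note in passing that the identical argument applies verbatim to $\rho_z$, $\rho_m$, and $\rho_c$, since in each case the feasible test tensors are rank-one products of unit vectors and hence have unit Frobenius norm.
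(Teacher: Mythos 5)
Your proof is correct and takes essentially the same route as the paper's: evaluate at a maximizing rank-one tensor, split off the linear term, and bound the remainder by $\bigfnorm{\mathcal A-\mathcal B}$. The only cosmetic difference is that you apply Cauchy--Schwarz directly (using that the rank-one test tensor has unit Frobenius norm), whereas the paper passes through the intermediate inequality $\bigjueduizhi{\rho(\mathcal A)-\rho(\mathcal B)}\leq\rho(\mathcal A-\mathcal B)\leq\bigfnorm{\mathcal A-\mathcal B}$, which encodes the same fact.
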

\begin{proof}
	For any $\mathcal A,\mathcal B\in\T$, assume that $(\mathbf y_1,\ldots,\mathbf y_d)\in T$ is such that   $\innerprod{\mathcal B}{\bigotimes^d_{j=1}\mathbf y_j}=\rho({\mathcal B})$. Thus
	\[
	\rho({\mathcal A})\geq   \innerprod{\mathcal A}{\bigotimes^d_{j=1}\mathbf y_j} = \rho({\mathcal B}) + \innerprod{\mathcal A-\mathcal B}{\bigotimes^d_{j=1}\mathbf y_j} \geq \rho({\mathcal B}) - \rho({\mathcal A-\mathcal B}).
	\]
On the other hand, since $\rho(\cdot)=\bignorm{\cdot}_2$, the tensor spectral norm,	  $\rho({\mathcal A}) \leq \rho({B})  + \rho({\mathcal A-\mathcal B}) $.   As a result,
	\[
	\bigjueduizhi{ \rho({\mathcal A})  - \rho({\mathcal B})  } \leq \rho({ \mathcal A - \mathcal B})  =\bignorm{\mathcal A-\mathcal B}_2 \leq \bignorm{\mathcal A-\mathcal B}_F, 
	\]
	where the last inequality follows from that the tensor spectral norm is smaller than its Frobenius norm. This completes the proof.
\end{proof}

\begin{proposition}[c.f. \cite{ledoux2001concentration}]
	\label{prop:concen_1-lip}
	Let $F(\cdot)$ be a real-valued $1$-Lipschitz function, and let $\mathbf x$ be a standard normal random vector. Then for any $t>0$, there holds 
	\[
  \Prob{F(\mathbf x) - \E F(\mathbf x)  > t }  \leq \exp( -t^2/2). 
	\]
\end{proposition}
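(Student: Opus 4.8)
The plan is to reduce the tail estimate to a sub-Gaussian bound on the moment generating function (MGF) of $F(\mathbf x)-\E F(\mathbf x)$, namely
\[
\E\exp\bigxiaokuohao{\lambda\bigxiaokuohao{F(\mathbf x)-\E F(\mathbf x)}}\leq \exp(\lambda^2/2)\quad\text{for all }\lambda>0,
\]
and then conclude by the exponential Markov (Chernoff) inequality together with a one-line optimization over $\lambda$. As a preliminary reduction I would replace $F$ by a smooth $1$-Lipschitz approximation: by Rademacher's theorem a Lipschitz function is differentiable almost everywhere with $\bignorm{\nabla F}\leq 1$, and convolving $F$ with a Gaussian mollifier produces smooth $F_\eps$ with $\bignorm{\nabla F_\eps}\leq 1$ and $F_\eps\to F$ uniformly on compacts. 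It therefore suffices to prove the MGF bound for smooth $F$ with $\bignorm{\nabla F}\leq 1$ and then pass to the limit, the convergences $\E F_\eps\to\E F$ and $\E\exp(\lambda F_\eps)\to\E\exp(\lambda F)$ being handled by Gaussian integrability.

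The core is the Herbst argument based on the Gaussian logarithmic Sobolev inequality, which I would import from the literature (it is the genuinely deep ingredient): for the standard Gaussian measure on $\mathbb R^n$ and any smooth $g$ one has $\mathrm{Ent}(g^2)\leq 2\,\E\bignorm{\nabla g}^2$, where $\mathrm{Ent}(\phi)=\E[\phi\log\phi]-\E[\phi]\log\E[\phi]$. Writing $H(\lambda):=\E\exp(\lambda F)$ and applying the inequality to $g=\exp(\lambda F/2)$, the left-hand side equals $\lambda H'(\lambda)-H(\lambda)\log H(\lambda)$, while the right-hand side is $\tfrac{\lambda^2}{2}\E[\bignorm{\nabla F}^2\exp(\lambda F)]\leq \tfrac{\lambda^2}{2}H(\lambda)$ since $\bignorm{\nabla F}\leq 1$. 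Introducing $K(\lambda):=\lambda^{-1}\log H(\lambda)$, the resulting differential inequality rearranges precisely to $K'(\lambda)\leq 1/2$; a Taylor expansion of $\log H$ at the origin gives $K(0^+)=\E F$, so integrating yields $\log H(\lambda)\leq \lambda\,\E F+\lambda^2/2$, which is exactly the claimed MGF bound.

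Finally, for any $\lambda>0$ the Chernoff bound gives $\Prob{F(\mathbf x)-\E F(\mathbf x)>t}\leq \exp(-\lambda t)\,\E\exp(\lambda(F-\E F))\leq \exp(-\lambda t+\lambda^2/2)$, and the choice $\lambda=t$ produces the stated $\exp(-t^2/2)$. I expect the main obstacle to be the passage from the differentiated entropy inequality to the clean MGF estimate: one must recognize $K(\lambda)=\lambda^{-1}\log H(\lambda)$ as the correct quantity whose derivative the log-Sobolev inequality controls, and carefully justify the limit $K(0^+)=\E F$ via an L'Hopital or Taylor computation. By contrast, the smoothing reduction, though technically necessary to apply the log-Sobolev inequality to a merely Lipschitz $F$, is routine, and the one-sided optimization over $\lambda$ is immediate; note also that only $\lambda>0$ is needed since the claim is a one-sided tail bound.
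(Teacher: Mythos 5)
The paper does not prove this proposition at all: it is imported verbatim from the cited reference (Ledoux's monograph), so there is no in-paper argument to compare against. Your proof --- mollify to reduce to smooth $1$-Lipschitz $F$, run the Herbst argument on the Gaussian log-Sobolev inequality $\mathrm{Ent}(g^2)\leq 2\,\E\bignorm{\nabla g}^2$ with $g=e^{\lambda F/2}$ to get $\E e^{\lambda(F-\E F)}\leq e^{\lambda^2/2}$, then Chernoff with $\lambda=t$ --- is correct, with the right constants, and is essentially the standard proof appearing in the very reference the paper cites.
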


With the above propositions and Theorem \ref{th:random_tensor_nonsymmetric}, we have the following one-side concentration inequality for the   largest singular value concerning a gaussian tensor.
\begin{corollary}
	\label{col:concentration_spectral_norm}
	Under the setting of Theorem \ref{th:random_tensor_nonsymmetric}, for any $t>0$, we have
	\[
  \Prob{ \rho({\mathcal A}) >\sqrt n_1 + \cdots + \sqrt n_d  + t }  \leq \exp( -t^2/2). 	 
	\]
\end{corollary}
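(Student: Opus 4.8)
The plan is to combine the two propositions just established with the expectation bound from Theorem~\ref{th:random_tensor_nonsymmetric}, following the standard Gaussian concentration template. The key observation is that a tensor $\mathcal A\in\T$ whose entries are independent standard normals is nothing but a standard normal random vector $\mathbf x\in\mathbb R^{n_1\cdots n_d}$ after vectorization, and under this identification the Frobenius norm $\bigfnorm{\cdot}$ on $\T$ coincides exactly with the Euclidean norm on $\mathbb R^{n_1\cdots n_d}$. This is what lets the dimension-free concentration constant survive the passage from the abstract tensor space to the setting of Proposition~\ref{prop:concen_1-lip}.

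First I would invoke Proposition~\ref{lem:1-lip}, which guarantees that $\rho(\cdot):\T\to\mathbb R$ is $1$-Lipschitz with respect to the Frobenius norm. Viewing $\rho$ as a function $F$ on $\mathbb R^{n_1\cdots n_d}$ via the vectorization above, $F$ is $1$-Lipschitz in the Euclidean norm, so Proposition~\ref{prop:concen_1-lip} applies directly and yields, for every $t>0$,
\[
\Prob{ \rho({\mathcal A}) - \E\rho({\mathcal A}) > t } \leq \exp(-t^2/2).
\]
Next I would replace the random centering $\E\rho(\mathcal A)$ by the deterministic upper bound $\sqrt{n_1}+\cdots+\sqrt{n_d}$ furnished by Theorem~\ref{th:random_tensor_nonsymmetric}. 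Since $\E\rho(\mathcal A)\leq\sqrt{n_1}+\cdots+\sqrt{n_d}$, the event $\{\rho(\mathcal A)>\sqrt{n_1}+\cdots+\sqrt{n_d}+t\}$ is contained in the event $\{\rho(\mathcal A)-\E\rho(\mathcal A)>t\}$: if $\rho(\mathcal A)$ exceeds $\sqrt{n_1}+\cdots+\sqrt{n_d}+t$, then it certainly exceeds $\E\rho(\mathcal A)+t$. Monotonicity of probability then chains the two estimates together to produce the claimed inequality.

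I do not expect any genuine obstacle here, since all the analytic content is already packaged in the two propositions and the theorem. The only points requiring a moment's care are the identification of the Gaussian tensor with a Gaussian vector, so that the concentration constant $1$ is preserved, and the direction of the inequality when substituting the expectation bound: the substitution works precisely because $\sqrt{n_1}+\cdots+\sqrt{n_d}$ is an \emph{upper} bound for $\E\rho(\mathcal A)$ and we are controlling the \emph{upper} tail, which is exactly the one-sided form that Proposition~\ref{prop:concen_1-lip} delivers.
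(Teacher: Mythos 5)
Your proposal is correct and follows exactly the route the paper intends: the paper gives no explicit proof of this corollary, stating only that it follows from Propositions \ref{lem:1-lip} and \ref{prop:concen_1-lip} together with Theorem \ref{th:random_tensor_nonsymmetric}, which is precisely the chain you spell out (vectorize the Gaussian tensor, apply the $1$-Lipschitz concentration bound, then enlarge the centering from $\E\rho(\mathcal A)$ to its upper bound $\sqrt{n_1}+\cdots+\sqrt{n_d}$ via inclusion of events). No gaps.
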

Similarly we have
\begin{corollary}
	\label{col:concentration_d_spectral_norm}
	Under the setting of Theorem \ref{th:random_tensor_nonsymmetric}, for any $t>0$, we have
	\[
	\Prob{ \rho_{\ell^d}({\mathcal A}) >2^{\frac{d-1}{2} }\prod^d_{j=1}\nolimits n_j^{\frac{d-2}{2d}}\sum^d_{j=1}\nolimits n_j^{\frac{1}{2}}  + t }  \leq \exp( -t^2/2). 	 
	\]
\end{corollary}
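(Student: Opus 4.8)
The plan is to follow exactly the three-step template used for Corollary \ref{col:concentration_spectral_norm}: first establish that $\rho_{\ell^d}(\cdot)$ is Lipschitz as a function on $\T$ equipped with the Frobenius norm, then feed this into the Gaussian concentration estimate of Proposition \ref{prop:concen_1-lip}, and finally replace the expectation $\E\rho_{\ell^d}(\mathcal A)$ by the upper bound furnished by Theorem \ref{th:random_tensor_nonsymmetric}.

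For the Lipschitz step I would repeat the variational argument of Proposition \ref{lem:1-lip} verbatim. Given $\mathcal A,\mathcal B\in\T$, choose $(\mathbf y_1,\ldots,\mathbf y_d)$ with $\bignorm{\mathbf y_j}_d=1$ attaining $\rho_{\ell^d}(\mathcal B)$, so that
\[
\rho_{\ell^d}(\mathcal A)\geq \innerprod{\mathcal A}{\bigotimes^d_{j=1}\mathbf y_j}=\rho_{\ell^d}(\mathcal B)+\innerprod{\mathcal A-\mathcal B}{\bigotimes^d_{j=1}\mathbf y_j},
\]
and symmetrically with the roles of $\mathcal A$ and $\mathcal B$ swapped. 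Cauchy--Schwarz then gives $\bigjueduizhi{\rho_{\ell^d}(\mathcal A)-\rho_{\ell^d}(\mathcal B)}\leq \bignorm{\mathcal A-\mathcal B}_F\cdot\bigfnorm{\bigotimes^d_{j=1}\mathbf y_j}$, and since $\bigfnorm{\bigotimes^d_{j=1}\mathbf y_j}=\prod^d_{j=1}\bignorm{\mathbf y_j}$ I would invoke the elementary fact already used in Lemma \ref{lem:1_H}, namely $\max_{\bignorm{\mathbf u_j}_d=1}\bignorm{\mathbf u_j}^2=n_j^{\frac{d-2}{d}}$, to control the Frobenius norm of the rank-one test tensor.

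With the Lipschitz constant in hand, the concentration step is immediate: apply Proposition \ref{prop:concen_1-lip} to the (suitably rescaled) map $\mathcal A\mapsto\rho_{\ell^d}(\mathcal A)$ viewed as a function of the standard Gaussian vector formed by the entries of $\mathcal A$, yielding a one-sided tail of the form $\exp(-t^2/2)$ after the Lipschitz normalization. Combining this deviation with the mean bound $\E\rho_{\ell^d}(\mathcal A)\leq 2^{\frac{d-1}{2}}\prod^d_{j=1}n_j^{\frac{d-2}{2d}}\sum^d_{j=1}n_j^{\frac{1}{2}}$ from Theorem \ref{th:random_tensor_nonsymmetric}, and noting that shifting the centering from $\E\rho_{\ell^d}$ up to the larger deterministic bound only decreases the probability, gives the claimed inequality.

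The step I expect to be delicate is pinning down the Lipschitz constant and confirming that it is compatible with the bare $\exp(-t^2/2)$ in the statement. Unlike the $\ell^2$ case, where $\rho(\cdot)$ is genuinely $1$-Lipschitz because the test tensor $\bigotimes^d_{j=1}\mathbf y_j$ is a unit Frobenius-norm tensor, here $\bigfnorm{\bigotimes^d_{j=1}\mathbf y_j}=\prod^d_{j=1}\bignorm{\mathbf y_j}_2$ can exceed $1$; indeed it lies between $1$ and $\prod^d_{j=1}n_j^{\frac{d-2}{2d}}$, since $\bignorm{\mathbf y_j}_2\geq\bignorm{\mathbf y_j}_d=1$. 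Thus the honest Frobenius-Lipschitz constant of $\rho_{\ell^d}$ is $L=\prod^d_{j=1}n_j^{\frac{d-2}{2d}}$, which would enter the exponent as $\exp(-t^2/(2L^2))$; recovering the clean $\exp(-t^2/2)$ requires either absorbing $L$ into the deviation variable or measuring the perturbation $\mathcal A-\mathcal B$ in the appropriately scaled norm, and this normalization is the one point that must be handled carefully rather than copied directly from Corollary \ref{col:concentration_spectral_norm}.
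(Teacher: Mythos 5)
Your proposal follows exactly the route the paper intends for this corollary: the paper offers no argument beyond the word ``Similarly,'' i.e.\ it implicitly reuses the template of Proposition~\ref{lem:1-lip} (Lipschitz continuity), Proposition~\ref{prop:concen_1-lip} (Gaussian concentration), and the mean bound from Theorem~\ref{th:random_tensor_nonsymmetric}. Your adaptation of the variational argument of Proposition~\ref{lem:1-lip} to the $\ell^d$ constraint is the correct one, and you are right that the entire weight of the proof falls on the Lipschitz constant.

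The concern you raise in your final paragraph is not merely a delicate normalization to be handled carefully --- it is a genuine gap, and it afflicts the corollary as printed, not just your write-up. Since $\bignorm{\mathbf y_j}_d=1$ forces $\bignorm{\mathbf y_j}_2\geq 1$, and $\max_{\bignorm{\mathbf y_j}_d=1}\bignorm{\mathbf y_j}_2=n_j^{\frac{d-2}{2d}}$ (attained at the uniform vector $n_j^{-1/d}(1,\ldots,1)^{\top}$, exactly the fact used in Lemma~\ref{lem:1_H}), the Frobenius-norm Lipschitz constant of $\rho_{\ell^d}$ is $L=\prod_{j=1}^{d}n_j^{\frac{d-2}{2d}}$; this is sharp, since taking $\mathcal B=0$ and $\mathcal A=\bigotimes_{j=1}^{d}\mathbf y_j$ with each $\mathbf y_j$ uniform gives $\rho_{\ell^d}(\mathcal A)-\rho_{\ell^d}(\mathcal B)\geq L^2=L\,\bignorm{\mathcal A-\mathcal B}_F$. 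For $d\geq 3$ and $n_j\geq 2$ one has $L>1$, so applying Proposition~\ref{prop:concen_1-lip} to the rescaled map $\rho_{\ell^d}/L$ yields only
\[
\Prob{\rho_{\ell^d}(\mathcal A)>\E\rho_{\ell^d}(\mathcal A)+t}\leq \exp\bigxiaokuohao{-\frac{t^2}{2L^2}},
\]
equivalently the stated bound with $Lt$ in place of $t$ inside the probability. Neither of your two suggested repairs recovers the bare $\exp(-t^2/2)$ with unscaled $t$; they only reparametrize the same weaker inequality. Absent an additional argument not supplied by the paper, the corollary should be read with the exponent $-t^2/(2L^2)$, and your proof is then complete as the proof of that corrected statement. (Note the issue vanishes for $d=2$, where $L=1$ and the corollary reduces to Corollary~\ref{col:concentration_spectral_norm}.)
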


We then consider the  symmetric cases. For an indices tuple $(i_1,\ldots,i_d)$, denote $\pi(i_1,\ldots,i_d)$ a permutation of $(i_1,\ldots,i_d)$; denote $\Pi(i_1,\ldots,i_d)$ the set of all the possible permutations of $(i_1,\ldots,i_d)$, and ${\rm card}(\cdot)$ the cardinality of a set. 

A symmetric Gaussian tensor $\mathcal A\in\SSS$ is defined as follows: 
\begin{definition}
	\label{def:gaussian_sym}
	Let $\mathcal A\in\SSS$ be defined as that the $\binom{n+d-1}{d}$ entries $\mathcal A_{i_1,\ldots,i_d}$  are independently drawn from  the Gaussian distribution, in which $i_1\leq i_2\leq\cdots \leq i_d$, $1\leq i_j\leq n$, $1\leq j\leq d$, satisfying
	\[
	A_{i_1\cdots i_d} \sim N\left(0,\frac{d}{ {\rm card}(\Pi(i_1,\ldots,i_d)  ) }\right).
	\]
Then,  set $\mathcal A_{\pi(i_1,\ldots,i_d )} = \mathcal A_{i_1,\ldots,i_d}$ for any permutation $\tau(i_1,\ldots,i_d)\in \Pi(i_1,\ldots,i_d)$. We call $\mathcal A$ a symmetric Gaussian random tensor. 
\end{definition}

For example, for $\mathcal A\in\mathbb S^{3^3}$,  let $\mathcal A_{111} \sim N(0,3), \mathcal A_{211}=\mathcal A_{121}=\mathcal A_{112}\sim N(0,1)$,  and $\mathcal A_{321}=\mathcal A_{312} =\mathcal A_{213}=\mathcal A_{231} = \mathcal A_{132} = \mathcal A_{123}\sim N(0,\frac{1}{2})$. Other entries are drawn from the same principle.

\begin{remark}
Definition \ref{def:gaussian_sym} is a higher-order generalization of the Gaussian Orthogonal Ensemble  (GOE) for symmetric random tensors: it is clear that	when $d=2$, according to Definition \ref{def:gaussian_sym}, $\mathcal A_{ji}=\mathcal A_{ij}\sim N(0,1),i\neq j$, and $\mathcal A_{ii}\sim N(0,2)$, which is exactly the definition of GOE.  
\end{remark}

\begin{proof}[Proof of Theorem \ref{th:random_tensor_symmetric}]
Similar to the proof of Theorem \ref{th:random_tensor_nonsymmetric}, let
$$
X_{\mathbf u}:= \innerprod{\mathcal A}{\overbrace{\mathbf u\otimes\cdots\otimes \mathbf u}^{d {~\rm times}}},~{\rm and}~ Y_{\mathbf u}:= d\innerprod{\mathbf h}{\mathbf u},
$$
where $\mathbf h$ is a standard Gaussian random vector.  Then $\E X_{\mathbf u}=\E Y_{\mathbf u}=0$.  
According to Definition \ref{def:gaussian_sym}, after some computations, it can be verified that
\begin{eqnarray}
\E \bigjueduizhi{ X_{\mathbf u} - X_{\mathbf u^{\prime}} }^2 &=& d\cdot \bigxiaokuohao{\innerprod{\mathbf u}{\mathbf u}^d- 2\innerprod{\mathbf u}{\mathbf u^{\prime}}^d+ \innerprod{\mathbf u^{\prime}}{\mathbf u^{\prime}}^d}\nonumber\\
&=& 
 d \cdot \sum^n_{i_1,\ldots,i_d=1}\bigxiaokuohao{\mathbf u_{i_1}\cdots \mathbf u_{i_d} - \mathbf u^{\prime}_{i_1}\cdots \mathbf u^{\prime}_{i_d}   }^2\nonumber\\
&\leq& d^2\bignorm{\mathbf u-\mathbf u^{\prime}}^2 = \E\bigjueduizhi{Y_{\mathbf u} - Y_{\mathbf u^{\prime}} }^2. \label{eq:proof:6}
\end{eqnarray}
Then $$\E \rho_z(\mathcal A) = \E \sup_{\bignorm{\mathbf u}=1 } X_{\mathbf u} \leq \E\sup_{\bignorm{\mathbf u}=1} Y_{\mathbf u}\leq d\sqrt n.$$ 

To estimate $\E\rho_h(\mathcal A)$, we redefine $Y_{\mathbf u}$ as 
\[
Y_{\mathbf u}:=  \alpha\innerprod{\mathbf h}{\mathbf u},~{\rm with}~ \alpha:= d\cdot 2^{\frac{d-1}{2}}n^{\frac{(d-1)(d-2)}{2d}}.
\]
According to Lemma \ref{lem:1_H} and similar to \eqref{eq:proof:6},  $\E \bigjueduizhi{ X_{\mathbf u} - X_{\mathbf u^{\prime}} }^2\leq \E\bigjueduizhi{Y_{\mathbf u} - Y_{\mathbf u^{\prime}} }^2$    when $\bignorm{\mathbf u}_d=1$. Similar to the estimate of $\rho_{\ell^d}(\mathcal A)$ in the proof of Theorem \ref{th:random_tensor_nonsymmetric},
\begin{align*}
\rho_h(\mathcal A)	&=\E\sup_{\bignorm{\mathbf u}_d=1}X_{\mathbf u} \leq  \E \sup_{\bignorm{\mathbf u}_d=1} Y_{\mathbf u} =  \alpha\E  \sup_{\bignorm{\mathbf u}_d =1} \innerprod{\mathbf h}{\mathbf u}= \alpha\E\bignorm{\mathbf h}_p \\
&\leq \alpha \bigxiaokuohao{\E \sum^n_{i=1}|\mathbf h_i|^p  }^{\frac{1}{p}} = d\cdot 2^{\frac{d}{2}} \bigxiaokuohao{\pi^{-\frac{1}{2}} \Gamma( \frac{1}{2(d-1)}+1  ) }^{\frac{d-1}{d}}   n^{\frac{d-1}{2}} \\
 & \leq d\cdot 2^{\frac{d-1}{2}} n^{\frac{d-1}{2}} . 
\end{align*}
\end{proof}

The partially symmetric Gaussian tensor $\mathcal A\in \M$ is defined as follows:
\begin{definition}
	\label{def:gaussian_part_sym}
Let $\mathcal A\in\SSS$ be defined as that the   are independently drawn from  the Gaussian distribution, in which $i_1\leq k$, $j\leq l$, satisfying
\[
A_{ijkl} \sim N\left(0,\frac{2}{ {\rm card}(\Pi(i,j,k,l)  ) }\right);
\]
 in this context, $\Pi(i,j,k,l)$ means all the possible permutations of $(i,j,k,l)$ in the partial symmetry sense. 
Then,  set $\mathcal A_{\pi(i_1,\ldots,i_d )} = \mathcal A_{i_1,\ldots,i_d}$ for any permutation $\tau(i_1,\ldots,i_d)\in \Pi(i_1,\ldots,i_d)$. We call $\mathcal A$ a  partially symmetric Gaussian random tensor. 
\end{definition}

For example, for $\mathcal A\in\mathbb S_M^{3\times 3\times 3\times 3}$, let $\mathcal A_{1111}\sim N(0,2)$, $\mathcal A_{1212}\sim N(0,2)$, $\mathcal A_{3212}=\mathcal A_{1232}\sim N(0,1)$, and $\mathcal A_{2312}=\mathcal A_{2213}=\mathcal A_{1322}=\mathcal A_{1223} \sim N(0,\frac{1}{2})$. Other entries are drawn from the same principle.

Definition \ref{def:gaussian_part_sym} is also a higher-order generalization of the matrix GOE in cases that $m=1$ or $n=1$.

\begin{proof}[Proof of Theorem \ref{th:random_tensor__part_symmetric}]
Define $X_{\mathbf u,\mathbf v}:= \innerprod{\mathcal A}{\mathbf u\otimes\mathbf v\otimes \mathbf u\otimes \mathbf v}$ and $Y_{\mathbf u,\mathbf v}:= 2\innerprod{\mathbf h_1}{\mathbf u} + 2\innerprod{\mathbf h_2}{\mathbf v}$, where $\mathbf h_1,\mathbf h_2$ are independent standard Gaussian vectors. 
\begin{eqnarray*}
\E\bigjueduizhi{ X_{\mathbf u,\mathbf v} - X_{\mathbf u^{\prime},\mathbf v^{\prime}} }^2 &=& 2\bigxiaokuohao{ 1 - \innerprod{\mathbf u}{\mathbf u^{\prime}}^2\innerprod{\mathbf v}{\mathbf v^{\prime}}^2  +1} \\
&=& 2\sum^m_{i,k=1}\sum^n_{j,l=1}\bigxiaokuohao{\mathbf u_i\mathbf v_j \mathbf u_k\mathbf v_l- \mathbf u_i^{\prime}\mathbf v_j^{\prime} \mathbf u_k^{\prime}\mathbf v_l^{\prime} }^2\\
& \leq& 4 \bignorm{\mathbf u-\mathbf u^{\prime}}^2 + 4\bignorm{\mathbf v-\mathbf v^{\prime}}^2 = \E \bigjueduizhi{ Y_{\mathbf u,\mathbf v} - Y_{\mathbf u^{\prime},\mathbf v^{\prime}} }^2.
\end{eqnarray*}
Then $\E\rho_m(\mathcal A) \leq \E \sup_{ \bignorm{\mathbf u}=\bignorm{\mathbf v}=1 }Y_{\mathbf u,\mathbf v}=2\sqrt m + 2\sqrt n$.
\end{proof}

The piezoelectric-type Gaussian tensor $\mathcal A\in \C$ is defined as follows:
\begin{definition}
	\label{def:gaussian_piezoelectric}
	Let $\mathcal A\in\C$ be defined as that the   are independently drawn from  the Gaussian distribution, in which $j\leq k$, satisfying
	\[
	A_{ijk} \sim N\left(0, 1 \right), j\neq k,~{\rm and}~ A_{ijj}\sim N(0,2).
	\]
	Then,  set $\mathcal A_{ikj} = \mathcal A_{ijk}$ for  $j<k$. We call $\mathcal A$ a piezoelectric-type Gaussian random tensor. 
\end{definition}

\begin{proof}[Proof of Theorem \ref{th:random_tensor__piezoelectric}]
The proof is the same as the previous theorems. 
\end{proof}

{\footnotesize \section*{Acknowledgment} This work was supported by the National Natural Science Foundation of China Grant 11801100 and the Fok Ying Tong Education Foundation Grant 171094.
}

   \bibliography{tensor,TensorCompletion,orth_tensor,random_tensor}
  \bibliographystyle{plain}

   \appendix

\end{document}